\newcommand\N{\mathbb{N}}
\renewcommand\le{\leqslant}
\renewcommand\ge{\geqslant}
\renewcommand\to{\rightarrow}
	\def\N{\mathbb{N}}
	\def\<{\langle }
	\def\>{\rangle }
\newtheorem{theor}{Theorem}[section]
\newtheorem{prop}[theor]{Proposition}%[section]
\newtheorem{lemma}[theor]{Lemma}%[section]
\newtheorem{coro}[theor]{Corollary}%[section]
\newtheorem{conj}[theor]{Conjecture}%[section]
\newtheorem{problem}[theor]{Problem}%[section]
\theoremstyle{definition}
\newtheorem{defi}[theor]{Definition}%[section]
\theoremstyle{remark}
\newtheorem{rema}[theor]{Remark}%[section]
\newtheorem*{claim*}{Claim}
\newtheorem*{qu*}{Question}
\newcommand{\genA}{[A]}
\newcommand{\Zd}{\mathbb Z^d}
\newcommand{\pp}{\mathbb P}
\newcommand{\p}{\mathbb P}
\newcommand{\e}{\mathbb E}
\newcommand{\U}{\mathcal U}
\newcommand{\UU}{\mathcal U}
\newcommand{\dhp}{\mathbb H_u}
\newcommand{\Ss}{\mathcal{S}}
\renewcommand{\iff}{\Longleftrightarrow}
\newcommand{\fip}{\lambda_i(p)}
\begin{document}

\title{The $d$-dimensional bootstrap percolation models with threshold at least double exponential}
\author{Daniel Blanquicett}

%\address{IMPA, Estrada Dona Castorina 110, Jardim Bot\^anico, Rio de Janeiro, 22460-320, Brazil}\email{quicetor@impa.br}
\address{Mathematics Department,
University of California, Davis, CA 95616, USA}
\email{drbt@math.ucdavis.edu}
%\thanks{The first two authors were partially supported by NSF grant DMS 1600742, the third author was partially supported by CNPq (Proc.~303275/2013-8) and FAPERJ (Proc.~201.598/2014), and the fifth author was supported by a Trinity Hall Research Studentship.}
\thanks{{\it Date}: January 22, 2022.\\
\indent 2010 {\it Mathematics Subject Classification.}  Primary 60K35; Secondary 60C05.\\
\indent {\it Key words and phrases.}  Anisotropic bootstrap percolation, Cerf-Cirillo method.}
	
\begin{abstract}
Consider a $p$-random subset $A$ of initially infected vertices in the discrete cube $[L]^d$,
and assume that the neighbourhood of each vertex consists of the $a_i$ nearest neighbours
in the $\pm e_i$-directions for each
$i \in \{1,2,\dots, d\}$, where $a_1\le a_2\le \dots \le  a_d$.
Suppose we infect any healthy vertex $v\in [L]^d$ already having $r$ infected neighbours,
and that infected sites remain infected forever.
In this paper we determine the $(d-1)$-times iterated logarithm of the critical length for percolation up to a constant factor, 
for all $d$-tuples $(a_1,\dots ,a_d)$ and all $r\in \{a_2+\dots + a_d+1,
\dots, a_1+a_2+\dots + a_d\}$.

Moreover, we reduce the problem of determining this (coarse) threshold for all $d\ge 3$ and all $r\in \{a_d+1,
\dots, a_1+a_2+\dots + a_d\}$, to that of determining the threshold for all $d\ge 3$ and all $r\in \{ a_d+1,
\dots, a_{d-1} + a_d\}$.
\end{abstract}
	
\maketitle 
\section{Introduction}
The study of bootstrap processes on graphs was initiated in 1979 by Chalupa,
Leath and Reich~\cite{ChLR79}, and is motivated by problems arising from statistical physics, such as the Glauber dynamics of 
the zero-temperature Ising model, and kinetically constrained spin models of the liquid-glass transition 
(see, e.g.,~\cite{DB21,FSS02,Morris09,MMT18,Morris17}). 
The $r$-neighbour bootstrap process on a locally finite graph $G$ is a monotone cellular automata on the 
configuration space $\{0,1\}^{V(G)}$, (we call vertices in state $1$ ``infected"), evolving in discrete time
in the following way: $0$ becomes $1$ when it has at least $r$ neighbours in state $1$, and infected vertices remain infected forever.
Throughout this paper, $A$ denotes the initially infected set, and we write $\genA=G$
% We say that {\it percolation} occurs 
if the state of each vertex is eventually 1.
%  The initial state is usually chosen to be the product of Bernoulli measures with density $p$, and the main question is to determine the so-called {\it threshold for percolation}: the value of $p$ above which the entire vertex set is likely to be infected by the end of the process.

We will focus on \emph{anisotropic} bootstrap models, which are $d$-dimensional analogues of a family of
(two-dimensional) processes studied by Duminil-Copin, van Enter and Hulshof \cite{EH07,DCE13,DEH18}.
In these models the graph $G$ has
vertex set $[L]^d$, and the neighbourhood of each vertex consists of the $a_i$ nearest neighbours in the
$-e_i$ and $e_i$-directions for each $i \in [d]$,
where $a_1\le \cdots\le a_d$ and $e_i\in\Zd$ denotes the $i$-th canonical unit vector.
In other words, $u,v\in [L]^d$ are neighbours if (see Figure \ref{figanis3d} for $d=3$)
\begin{align}\label{neigh3} %\nonumber \\ % no number is shown
u-v\in N_{a_1,\dots,a_d}:=\{\pm e_1,\dots, \pm a_1e_1\}\cup \cdots \cup \{\pm e_d,\dots, \pm a_de_d\}.
\end{align}
We also call this process the $\mathcal N_r^{a_1,\dots, a_d}$-{\it model}.
Our initially infected set $A$ %$A\subset\Zd$ 
is chosen according to the Bernoulli product measure $\p_p=\bigotimes_{v\in [L]^d}$Ber$(p)$,
% (meaning, $\pp_p(x\in A)=p$ for all $x\in\Lambda$, independently)
and we are interested in the so-called {\it critical length for percolation},
% which we denote as $L_c(\mathcal N_r,p)$, 
for small values of $p$
\begin{equation}\label{criticalL}
 L_c(\mathcal N_r^{a_1,\dots,a_d},p):= \min\{L\in\mathbbm N: \pp_p(\genA=[L]^d
%  \genA_{\mathcal N}=[L]^d
 )\ge 1/2\}.
 \end{equation}
% Our aim is to understand the typical behaviour of this process when the initial set of infected vertices $A$ is chosen at random.
% $\mathcal N^d= \{\pm e_1,\dots,\pm e_d\}$ 

% The first rigorous mathematical results obtained on $\Zd$, with $\U=\mathcal N_r^d$ defined as the collection of 
% all subsets of size $r$ of the set $\{\pm e_1,\dots,\pm e_d\}$  ($e_k\in\Zd$ denotes the $k$-th canonical unit vector), were achieved by van Enter \cite{vE87} and Schonmann \cite{Sch92}, who showed that if $A\subset\Zd$ is chosen to be $p$-random
% (meaning, $\pp_p(x\in A)=p$ for all $x\in\Lambda$, independently), then percolation occurs almost surely for any $p > 0$ if $r\le d$, and fails to occur almost surely for any $p < 1$ if $r > d$. 
 
The analysis of these bootstrap processes for $a_1=\cdots= a_d=1$ was initiated by Aizenman and Lebowitz~\cite{AL88} in 1988,
who determined the magnitude of the critical length %\footnote{Some authors consider instead the {\it critical probability}, which is basically the inverse function of the critical length  and determining the former is equivalent to determining the latter.} 
up to a constant factor in the exponent for the $\mathcal N_2^{1,\dots,1}$-model (in other words, they determined the
`metastability threshold' for percolation). In the case $d = 2$, Holroyd~\cite{H03} determined (asymptotically, as $p \to 0$) the constant in the exponent  (this is usually called a sharp metastability threshold).
%, proving that
%$$L_c(\mathcal N_2^{1,1},p) = \exp\bigg( \frac{\pi^2/18 + o(1)}{p} \bigg).$$

For the general $\mathcal N_r^{1,\dots,1}$-model with $2\le r\le d$, the threshold was determined by Cerf and Cirillo \cite{CC99} and Cerf
and Manzo \cite{CM02}, and the sharp threshold by Balogh, Bollob\'as and Morris \cite{BBM09}
and Balogh, Bollob\'as, Duminil-Copin and Morris  \cite{BBDM12}: for all $d\ge r\ge 2$ there exists a computable constant 
$\lambda(d,r)$ such that, as $p\to 0$,
\begin{equation*}
 L_c(\mathcal N_r^{1,\dots,1},p) = \exp_{(r-1)}\bigg(\frac{\lambda(d,r) + o(1)}{p^{1/(d-r+1)}}\bigg).
\end{equation*}

% More recently attention has moved to general models on $[L]^d$, for instance 
% given $s\le t$ and $r$ positive integers, let us consider the family $\mathcal N_{r}^{s,t}$ consisting of all subsets of size $r$ of the set
%\begin{equation}\label{Nst}
%N_{s,t}:=\{s'e_1:\pm s'\in[s]\}\cup \{t'e_2:\pm t'\in[t]\}.
%\end{equation}

%In dimension $d=2$, we write $a_1=a, a_2=b$, and 
The $\mathcal N_r^{a_1,a_2}$-model is called isotropic when $a_1=a_2$ and anisotropic when $a_1<a_2$.
% for example, $\mathcal N_{2}^{2}=\mathcal N_{2}^{1,1}$ is an isotropic model.
Hulshof and van Enter \cite{EH07} determined the threshold for the first interesting anisotropic model given by the family $\mathcal N_{3}^{1,2}$, and the 
corresponding sharp threshold was determined by Duminil-Copin and van Enter  \cite{DCE13}.
%: for $b\ge 2$, as $p\to 0$,
%\begin{equation*} L_c\left(\mathcal N_{b+1}^{1,b},p\right)= \exp\left(\left(\frac{(b-1)^2}{4(b+1)} + o(1)\right)\frac{(\log p)^2}{p}\right). \end{equation*}

The threshold was also determined in the general case $r=a_1+a_2$ by van Enter and Fey 
 \cite{AA12} and the proof can be extended to all $a_2+1\le r\le a_1+a_2$: as $p\to 0$,
\begin{equation}\label{paso1}
L_c\left(\mathcal N_{r}^{a_1,a_2},p\right)=\exp\left(
\Theta\left(\lambda_{r-a_2}(p)\right)\right),
\end{equation}
where for each $i\in[a_1]$,

\begin{equation}\label{fip}
\fip=\lambda_i(p,a_1,a_2)=
\begin{cases}
p^{-i} & \textup{if }a_2=a_1,\\
p^{-i}(\log p)^2 & \textup{if } a_2>a_1.
\end{cases}
\end{equation}

%\begin{equation}\label{paso1}
%\log L_c\left(\mathcal N_{r}^{a,b},p\right)=
%\begin{cases}
%\Theta\left(p^{-(r-b)}\right) & \textup{if }b=a,\\
%\Theta\left(p^{-(r-b)}(\log p)^2\right) & \textup{if }b>a.
%\end{cases}
%\end{equation}
 
%%%%%%%%%%%%%%%%%%%%%%%%%%%%%%%%%%%%%%%%%%%%%%%%%%%%%%%%%%%%%%%%%%%%%%%%%%%%%%%%%%%%%%%%%%%%%%%%
\subsection{Anisotropic bootstrap percolation on $[L]^d$}
In this paper we consider the $d$-dimensional analogue of the anisotropic bootstrap process studied by Duminil-Copin,
van Enter and Hulshof. %~\cite{EH07,DCE13,DEH18}. 
In dimension $d=3$, we write $a_1=a, a_2=b$ and $a_3=c$.
%Here, we consider bootstrap percolation on $\Lambda=[L]^3$. Let us set the specific update families we are interested in.
% \begin{defi}\label{misfam}
%Given $a\le b\le c$, %and $r$ positive integers, define the family $\mathcal N_{r}^{a,b,c}$
% as the collection of all subsets of size $r$ of the set 
%we declare $x,y\in [L]^3$ to be neighbours if
%  \begin{align}%\label{neigh3} %\nonumber \\ % no number is shown
%  \begin{split}
% x-y\in N_{a,b,c}:= \{a'e_1:\pm a'\in[a]\}\cup \{b'e_2:\pm b'\in[b]\}\cup \{c'e_3:\pm c'\in[c]\}.
%  \end{split}
%  \end{align}
% \end{defi}
\vskip -.2cm
\begin{figure}[ht]
	\centering
	\includegraphics[width=0.35\textwidth]{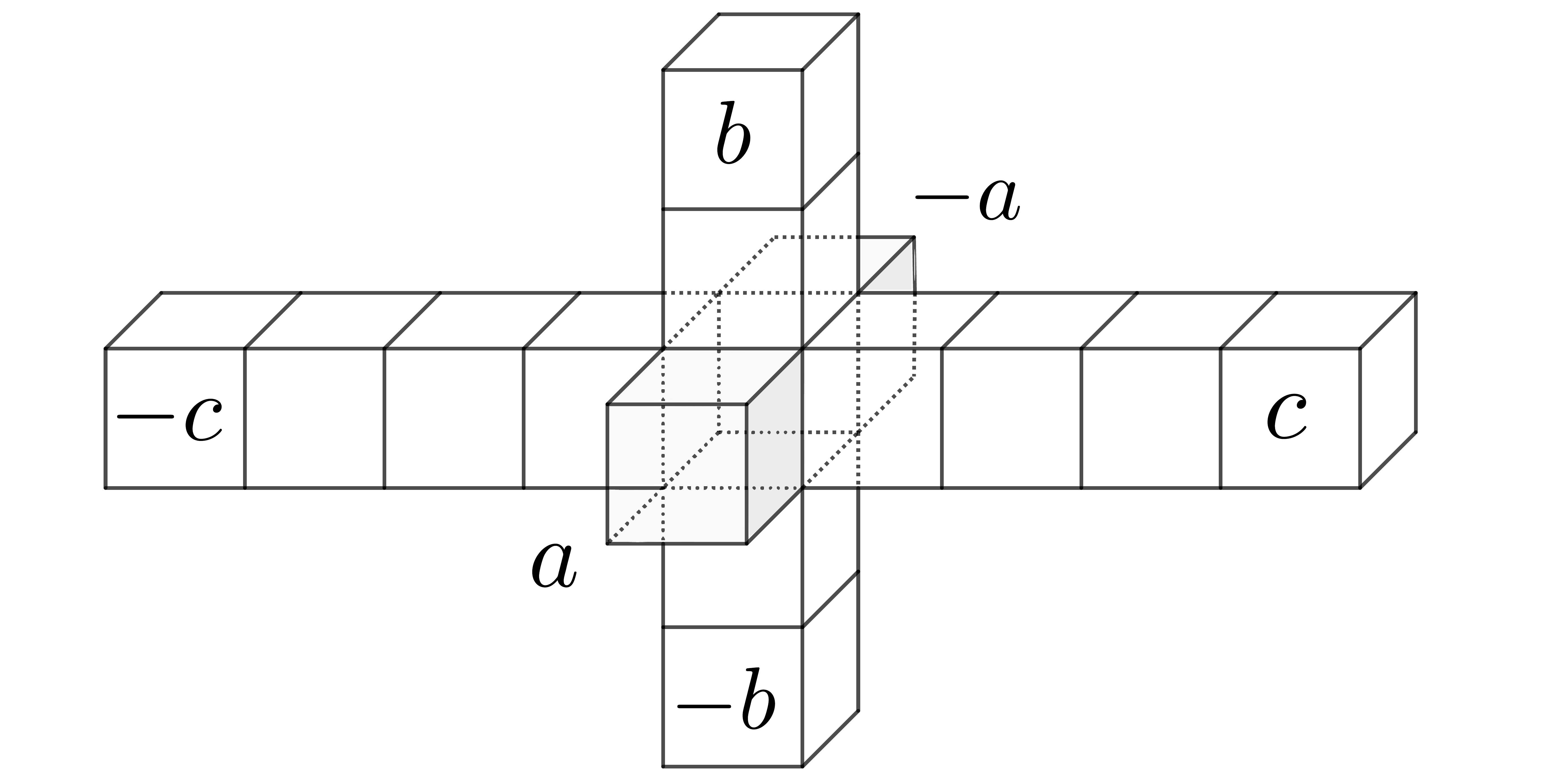}
	\caption{The neighbourhood $N_{a_1,a_2,a_3}$ with $a_1=1, a_2=2$ and $a_3=4$. The $e_1$-axis is towards the reader, the 
	$e_2$-axis is vertical, and the $e_3$-axis is horizontal.}
	\label{figanis3d}
\end{figure}
%There is an advantage in considering this model of percolation, which we will be able to exploit,
%namely, the presence of symmetry, this property implies that growth is equally difficult in every pair of opposite directions.
These models were studied by van Enter and Fey \cite{AA12} for $r=a+b+c$; 
they determined the following bounds on the critical length, % for $\mathcal N_{a+b+c}^{a,b,c}$,
% \begin{defi}
% 	Given $a\le b$ and $r$ positive integers ($r$ could also be 1/2) we define the {universality function} $f_{r}^{a,b}:[1,\infty)\to[0,\infty)$ by
% 	\begin{equation}\label{funiv}
% 	f_r^{a,b}(x)= \begin{cases}
% % % 	x^r  \text{\ \ \ \ \ \ \ \ \ \ \  if } a=b, \\
% 	x^r\log^2(x)  \text{ \, if } a<b.
% 	\end{cases}
% % 	\end{equation}
% \end{defi}
%\begin{theor}[van Enter and Fey]\label{mibase0}
	%-bootstrap percolation. %There exist constants 	$\Gamma>\gamma>0$ depending on $a,b$ and $c$ such that,
as $p\to 0$,

\begin{equation}\label{vefey}
		\log \log L_c\left(\mathcal N_{a+b+c}^{a,b,c},p\right)= 
	\Theta\left(\lambda_a(p)\right)
%	\begin{cases}
%	\Theta\left(p^{-a}\right)                    & \textup{if } b=a, \\
%	\Theta\left(p^{-a}(\log \frac 1p)^{2}\right) & \textup{if } b>a.
%	\end{cases}
	\end{equation}
%\end{theor}

% We would like to generalize the result of van Enter and Fey to all $a,b,c$ and $r$;
Note that, by (\ref{vefey}) the critical length is doubly exponential in $p$ when $r=a+b+c$. 
%We believe this is the case for all $b+c<r\le a+b+c$, and have proved the following upper bounds in this case.
%\begin{theor}\label{target3crit}
%	For $r\in\{b+c+1,\dots,a+b+c\}$, as $p\to 0$,
%\begin{equation}
%\log\log L_c\left(\mathcal N_{r}^{a,b,c},p\right)=
%\begin{cases}
%O\left(p^{-(r-(b+c))}\right) & \textup{if }b=a,\\
%O\left(p^{-(r-(b+c))}(\log \frac 1p)^2\right) & \textup{if }b>a.
%\end{cases}
%\end{equation}
%\end{theor}
It is not difficult to show that the critical length is polynomial in $p$ if $r\le c$. %(see Proposition \ref{supercri} below).

On the other hand, we have shown in \cite{DB20} that the critical length is singly exponential in the case $r\in\{c+1,\dots,c+b\}$: as $p\to 0$,
\begin{equation}\label{2criti}
\Omega\left(p^{-1/2}\right) \le \log L_c\left(\mathcal N_{r}^{a,b,c},p\right)\le O\left(p^{-b}(\log \tfrac 1p)^2\right).
\end{equation}\\
We moreover determined the magnitude of the critical length up to a constant factor in the exponent in the cases $r\in\{c+1,c+2\}$, for all triples $(a,b,c)$, except for  $r=c+2$ when $c=a+b-1$ (see Section 6 in \cite{DB20}): set $s:=r-c\in\{1,2\}$, then, as $p\to 0$, 
\begin{equation}\label{magnitudenP1}
\log L_c\left(\mathcal N_{r}^{a,b,c},p\right)=  \begin{cases}
\Theta\left(p^{-s/2}\right)  & \textup{if } c=b=a, \\
\Theta\left(p^{-s/2}(\log \frac 1p)^{1/2}\right) & \textup{if } c=b>a, \\
\Theta\left(p^{-s/2}(\log \frac 1p)^{3/2}\right) & \textup{if } c\in \{b+1,\dots,a+b-s\}, \\
%O\left(%{\color{blue}
%p^{-1}(\log \frac 1p)^{2}\right)               & \textup{if } c=a+b-1\textup{ and } r=c+2,\\
\Theta\left(p^{-s}\right)                                  &   \textup{if } c=a+b,   \\
\Theta\left(p^{-s}(\log \frac 1p)^{2}\right)   & \textup{if } c> a+b. 
	\end{cases}
\end{equation}
While we conjecture that
$\log L_c\left(\mathcal N_{a+b+1}^{a,b,a+b-1},p\right)= \Theta\left(p^{-1}(\log \tfrac 1p)^{2}\right).$

In this paper we generalize \eqref{vefey}
by showing that the critical length is doubly exponential in $p$ for each $r\in\{c+b+1,\dots,c+b+a\}$.
Indeed, we determine $\log_{(d-1)}\big(L_c\left(\mathcal N_{a_d+ \cdots+ a_{2}+i}^{a_1,\dots,a_d},p\right)  \big)$  up  to  a  constant  factor, for all dimensions $d\ge 3$ and every $i\in [a_1]$.

The following is our main result.
%families $\mathcal N_{c+1}^{a,b,c}$:
\begin{theor}\label{doublyexp}
For each $d\ge 3$ and $i\in[a_1]$, as $p\to 0$, 
\begin{equation}%\label{magnitud}
%L_c\left(\mathcal N_{c+b+i}^{a,b,c},p\right)=  \exp(\exp\left( \Theta\left(f_{i}(p)\right)\right)).
L_c\left(\mathcal N_{a_d+ \cdots+ a_{2}+i}^{a_1,\dots,a_d},p\right)     = 
\exp_{(d-1)} \Theta(\lambda_i(p)).
\end{equation}
\end{theor}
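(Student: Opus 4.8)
The plan is to prove both directions by induction on the dimension $d$, with base case $d=2$ furnished by \eqref{paso1} (taking $r=a_2+i$, so that $r-a_2=i$). Write $N_k$ for the critical length $L_c\left(\mathcal N_{a_k+\dots+a_2+i}^{a_1,\dots,a_k},p\right)$ of the $k$-dimensional model in the family; since $\lambda_i(p)=\lambda_i(p,a_1,a_2)$ involves only $a_1,a_2$ and $i$, the ``rate'' is dimension-independent, and the inductive step (for $d\ge3$) will be to show $N_d=\exp(\Theta(N_{d-1}))$, which together with the base case $N_2=\exp(\Theta(\lambda_i(p)))$ gives the claimed $N_d=\exp_{(d-1)}\Theta(\lambda_i(p))$. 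The engine is the way a $d$-dimensional droplet grows in the direction $e_d$ of the largest radius $a_d$: if a box $D\subseteq[L]^d$ with $e_d$-extent at least $a_d$ is infected, a healthy vertex $v$ in the hyperplane just above $D$ already has $a_d$ infected neighbours in the $-e_d$ direction, so it becomes infected exactly upon collecting a further $r-a_d=a_{d-1}+\dots+a_2+i$ infected neighbours from $\pm e_1,\dots,\pm e_{d-1}$, i.e.\ from inside the new slab; hence the infection restricted to that slab is precisely the $(d-1)$-dimensional model $\mathcal N_{a_{d-1}+\dots+a_2+i}^{a_1,\dots,a_{d-1}}$, a member of the same family with the \emph{same} index $i$. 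The same holds for growth in any coordinate $e_j$, $j\ge2$ (using the $\pm e_d$-help available once the $e_d$-extent is large): the induced process is the $(d-1)$-dimensional model with radii $(a_1,\dots,\widehat{a_j},\dots,a_d)$ and the same $i$; growth in $e_1$ leads to the larger index $i'=a_2-a_1+i$ and can be carried out last, once every other coordinate already has length $L$. Two consequences drive the proof: a $(d-1)$-dimensional cross-section must reach the $(d-1)$-dimensional critical size $N_{d-1}$ before a $d$-dimensional droplet can propagate in $e_d$ at all; and, since a $(d-1)$-dimensional critical droplet is ``long'' precisely in its largest-radius coordinate --- which, for the slab governing growth in $e_{d-1}$, is $e_d$ --- the $e_d$-extent of a \emph{self-sustaining} $d$-dimensional droplet must itself be grown up to order $N_{d-1}$ before it can propagate in $e_{d-1}$.

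For the upper bound I would run a Cerf--Cirillo-type multiscale construction. Fix scales $\ell_2\ll\ell_3\ll\dots\ll\ell_d=L$ with $\ell_k=\exp_{(k-1)}(\Theta(\lambda_i(p)))$. By the base case a suitable box of diameter $\ell_2$ contains, with probability bounded below, an internally spanned two-dimensional critical droplet of the right long-and-thin shape; this is the seed. Given a $(k-1)$-dimensional internally spanned droplet whose cross-section exceeds the $(k-1)$-dimensional critical size by a large constant factor $K$, one grows in $e_k$ one slab at a time: by the inductive hypothesis each new slab is internally spanned, as a $(k-1)$-dimensional process in the family, with probability $\ge1-e^{-\Omega(K)}$, so a union bound over the $\le\ell_k$ slabs keeps the total failure probability small, while the cross-section is grown in $e_2,\dots,e_{k-1}$ in parallel using the $\pm e_k$-help, again by the inductive hypothesis. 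Iterating up to $k=d$ and finally filling $e_1$, one checks that the probability of producing a self-sustaining $d$-dimensional droplet at a prescribed location is at least $\exp(-C'N_{d-1})$ for a constant $C'$ --- the dominant cost being the order-$N_{d-1}$ successive $e_d$-slab-spanning events, each of probability bounded below, needed to raise the $e_d$-extent to $N_{d-1}$. Since $[L]^d$ contains $\asymp(L/\ell_2)^d$ disjoint candidate locations and a self-sustaining droplet infects the whole cube with probability $\ge1/2$, choosing $C$ large in $L=\exp_{(d-1)}(C\lambda_i(p))$ makes the percolation probability at least $\tfrac12$.

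For the lower bound I would use the Aizenman--Lebowitz lemma: if $[L]^d$ is internally spanned, then for every $m\le L$ there is an internally spanned droplet whose bounding box has longest side in $[m,2m]$. Taking $m$ of order $N_{d-1}$, one obtains an internally spanned droplet whose cross-section (in $e_1,\dots,e_{d-1}$) is at least of the $(d-1)$-dimensional critical size and whose $e_d$-extent is $\Theta(N_{d-1})$; by the recursive structure, such a droplet cannot exist unless one first produces the $(d-1)$-dimensional critically-sized cross-section --- an event of probability at most $N_{d-1}^{-\Theta(1)}$ by the inductive hypothesis --- and then grows its $e_d$-extent from $O(1)$ up to $\Theta(N_{d-1})$, which, the cross-section being only at the bare $(d-1)$-dimensional critical scale, proceeds through $\Theta(N_{d-1})$ essentially independent $e_d$-slab fillings each of probability bounded away from $1$; hence this droplet has probability at most $\exp(-\Theta(N_{d-1}))$. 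A union bound over the $\le L^d$ possible locations shows that $[L]^d$ is internally spanned with probability at most $L^d\exp(-\Theta(N_{d-1}))$, which is below $\tfrac12$ unless $\log L\gtrsim N_{d-1}$, that is $L\ge\exp_{(d-1)}(c\lambda_i(p))$ for a suitable $c>0$.

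Essentially all of the work lies in making this recursive step rigorous. A thin $e_d$-slab does not literally run an independent copy of the $(d-1)$-dimensional process: it receives help from the adjacent, only partially infected, regions, and the events ``slab $k$ is internally spanned'' for consecutive $k$ are not independent, so the naive products of probabilities above must be replaced by genuine Cerf--Cirillo renormalization and the hierarchy decompositions of the two-dimensional anisotropic literature; controlling precisely how the $e_d$-extent is built up --- and in particular how a cross-section of thickness $a_d$ is created in the first place --- is the most delicate point. The induction must moreover carry quantitative data: good bounds on the probability that a box of the critical size is internally spanned, and enough control of the \emph{shape} of critical droplets, so that a $(d-1)$-dimensional critical droplet is at once large enough to be self-sustaining and correctly proportioned to serve as a cross-section that both fills under the easy $(d-1)$-dimensional process and is long enough in $e_d$ to support later growth in $e_{d-1},\dots,e_1$. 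Finally, the lower bound requires an extremal analysis over all admissible hierarchies, ruling out shortcuts that would exploit the cheaper directions $e_2,\dots,e_{d-1}$ or split a droplet in an unforeseen way; verifying that the recursion on the single coordinate $e_d$ captures the dominant behaviour is the main obstacle.
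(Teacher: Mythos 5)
Your overall architecture --- induction on $d$, slab decomposition in the $e_d$-direction, an Aizenman--Lebowitz lemma, and a Cerf--Cirillo argument for the lower bound --- is the same as the paper's, and your identification of the induced $(d-1)$-dimensional processes on the faces (same index $i$ for growth in $e_2,\dots,e_d$, index $a_2-a_1+i$ for growth in $e_1$) is correct. But the two quantitative claims on which your lower bound rests are not established, and one of them is wrong at the scale you chose. You apply Aizenman--Lebowitz at scale $\Theta(N_{d-1})$, i.e.\ at the $(d-1)$-dimensional \emph{critical} length, and then assert (i) that producing a critically-sized $(d-1)$-dimensional cross-section has probability at most $N_{d-1}^{-\Theta(1)}$ ``by the inductive hypothesis'', and (ii) that raising the $e_d$-extent proceeds through $\Theta(N_{d-1})$ ``essentially independent'' slab fillings each of probability bounded away from $1$. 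Claim (i) fails as stated: at the critical length the spanning probability is of order $1$ by the very definition \eqref{criticalL}, and your inductive hypothesis --- the two-sided asymptotics for $N_{d-1}$ --- contains no polynomial decay at that scale. Claim (ii) is exactly the hard point, and invoking ``Cerf--Cirillo renormalization'' by name does not supply the hypotheses it needs. The paper works one scale lower precisely to make these verifiable: it applies Lemma \ref{ALlema0} at scale $\log L\le\exp_{(d-2)}(\gamma\lambda_i(p))$, which is \emph{subcritical} for the $(d-1)$-dimensional model, couples the restriction of the dynamics to slabs $[N]^{d-1}\times[2a_d]$ with genuinely independent $(d-1)$-dimensional processes of density $2a_dp$, and then verifies the hypotheses of Lemma \ref{mainLema}; the substantive ones are the subcritical cluster-diameter bound of Proposition \ref{condition(c)} and the expected-cluster-size bound $\E_p(|\mathcal K|)=o(1)$ of Proposition \ref{condition(d)} (the good-edge condition), both proved by induction jointly with Proposition \ref{lower1}. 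Your sketch contains neither estimate and nothing playing the role of the good-edge condition, so the exponential bound $\alpha^{m}$ you want has no source. (Relatedly, the inductive statement $N_d=\exp(\Theta(N_{d-1}))$ is stronger than the theorem, since constants inside iterated exponentials do not survive exponentiation; the induction has to carry explicit constants through propositions, as the paper does.)

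On the upper bound the gap is smaller but real: the step ``each new slab is internally spanned with probability $\ge 1-e^{-\Omega(K)}$ by the inductive hypothesis'' does not follow from critical-length asymptotics, which give only probability $\ge 1/2$ at criticality; you need the renormalized estimate of Lemma \ref{elactual} (applied in dimension $d-1$), which gives spanning probability $1-e^{-\Omega(N)}$ above criticality and is proved in a double induction interlocked with Proposition \ref{upper2}. You flag the need for such strengthening, but it is part of the proof, not an afterthought. Moreover, your multiscale seed construction and the estimate $\exp(-C'N_{d-1})$ for creating a seed at a prescribed location are unnecessary: since $L$ is a full exponential above the seed scale, the paper simply finds a completely infected block $[N]^{d-1}\times[a_d]$ inside $A$ by a first-moment computation over the $L^d/|R|$ disjoint copies, then grows it to $[N]^{d-1}\times[M]$ and onwards using Lemma \ref{elactual} and the supercritical induced processes on the faces.
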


The techniques in this paper can be used to reduce the general problem of determining $L_c\left(\mathcal N_{r}^{a_1,\dots,a_d},p\right)$ (coarse threshold) for all $d\ge 3$ and all $r\in \{a_d+1,
\dots, a_1+a_2+\dots + a_d\}$, to that of determining $L_c\left(\mathcal N_{r}^{a_1,\dots,a_d},p\right)$  for all $d\ge 3$ and all $r\in \{ a_d+1,
\dots, a_{d-1} + a_d\}$
(the 2-critical families only, see Definition  \ref{rcri} and Section \ref{future} below).

\begin{coro}\label{only2cri}
For every $m\in \{2,\dots, d\}$ and $i\in [a_{m-1}]$, as $p\to 0$, the following holds:
if $ L_c\left(\mathcal N_{a_{m}+i}^{a_1,\dots,a_m},p\right)
=\exp \Theta\left(\xi_i(p)
\right)$, for some function $\xi_i(p)= \xi_i (p,a_1,\dots , a_m) $ then
%\[\log_{(d-m+1)} L_c\left(\mathcal N_{a_d+ \cdots+ a_{m}+i}^{a_1,\dots,a_d},p\right)     = \Theta\left(\log L_c\left(\mathcal N_{a_{m}+i}^{a_1,\dots,a_m},p\right)     \right). , {\bf a}_m    \]
\[L_c\left(\mathcal N_{a_d+ \cdots+ a_{m}+i}^{a_1,\dots,a_d},p\right)     = 
\exp_{(d-m+1)} \Theta(\xi_i(p)). 
\]
\end{coro}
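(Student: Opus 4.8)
The plan is to prove Corollary \ref{only2cri} by induction on $d-m$, using Theorem \ref{doublyexp} as the prototype (which is the case $m=2$, $\xi_i(p)=\lambda_i(p)$, recovering $\exp_{(d-1)}\Theta(\lambda_i(p))$). The base case $d=m$ is the hypothesis itself. For the inductive step, suppose the statement holds for dimension $d-1$; we must pass from $L_c\left(\mathcal N_{a_{d-1}+\cdots+a_m+i}^{a_1,\dots,a_{d-1}},p\right)=\exp_{(d-m)}\Theta(\xi_i(p))$ to the corresponding bound in dimension $d$ with one extra exponential. The key structural observation, already exploited in the proof of Theorem \ref{doublyexp}, is that in the $\mathcal N_r^{a_1,\dots,a_d}$-model with $r=a_d+\cdots+a_m+i$, a hyperplane slab $[L]^{d-1}\times\{k,\dots,k+a_d-1\}$ of thickness $a_d$ behaves, once it is sufficiently internally infected, like a single ``super-site'': it can be infected from $r-a_d=a_{d-1}+\cdots+a_m+i$ infected neighbours living in the $d-1$ remaining directions, which is exactly the threshold of the $(d-1)$-dimensional model. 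Thus the growth in the last coordinate is governed by an effective one-dimensional $1$-neighbour-type process whose ``sites'' are fully-infected $(d-1)$-dimensional slabs.

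The argument then splits into the two matching halves. For the \textbf{upper bound} on $L_c$, I would construct an explicit infection strategy: first use a seed of probability $p$ to infect, with probability bounded below by a constant, a fully infected $(d-1)$-dimensional box of side length $\ell_{d-1}=\exp_{(d-m)}(C\xi_i(p))$ inside one slab of thickness $a_d$ — this is exactly what the inductive hypothesis (its upper bound half) provides, after checking that a $(d-1)$-dimensional fully infected box of this size appears with the right probability. Once such a slab-block exists, growth in the $e_d$-direction becomes essentially deterministic and cheap: each successive slab needs only $i$ additional infected sites per column beyond the $a_d$ it inherits from the already-infected neighbouring slab, and a standard second-moment / renormalization argument (as in \cite{AL88,CC99} and as used for Theorem \ref{doublyexp}) shows that one can traverse a further $\exp(\Theta(\lambda\text{-scale}))$ slabs, i.e.\ exponentiate once more, giving $L_c\le \exp_{(d-m+1)}(C'\xi_i(p))$. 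For the \textbf{lower bound}, I would run the complementary argument: a Aizenman--Lebowitz-type lemma says that if $[L]^d$ is internally spanned then there is an internally spanned box of every intermediate size up to $L$; combined with a union bound over locations and the observation that the ``cheapest'' way to internally span a box of side $\exp_{(d-m)}(c\xi_i(p))$ in $d$ dimensions is to first internally span a $(d-1)$-dimensional cross-section (for which the inductive lower bound gives the cost $\exp_{(d-m)}(c\xi_i(p))$) and then grow in the last direction, one concludes that spanning a $d$-dimensional box requires side length at least $\exp_{(d-m+1)}(c'\xi_i(p))$. This is where one needs the crossing/hierarchy estimates that control the probability of internally spanning along a single axis — precisely the technical engine developed for Theorem \ref{doublyexp}, which I would cite rather than reprove.

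The main obstacle is making the ``slab acts as a super-site'' reduction rigorous and \emph{quantitatively tight in both directions simultaneously}. On the upper-bound side the subtlety is that a fully infected $(d-1)$-dimensional slab is not literally a single site: to propagate infection to the next slab one needs the induced infection in the new slab to itself nucleate and grow to fill that slab, and one must verify this happens before the ``droplet'' in the $e_d$-direction dies — i.e.\ a time/space synchronization between the fast internal $(d-1)$-dimensional dynamics and the slow $e_d$-dynamics. On the lower-bound side the danger is the reverse: one must rule out ``diagonal'' infection routes that span the $d$-dimensional box without ever producing a fully infected $(d-1)$-dimensional cross-section, which requires the careful geometric bookkeeping (the hierarchy of rectangular blocks and the control of their perimeters) that occupies the bulk of the paper. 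Handling the dependence of the implied constants on the fixed tuple $(a_1,\dots,a_d)$ and on $i$ — so that the $\Theta(\cdot)$ in the conclusion is uniform in $p\to 0$ — is a further bookkeeping point, but not a conceptual one; it follows the same pattern as in the proof of Theorem \ref{doublyexp}.
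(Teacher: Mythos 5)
Your plan is essentially the paper's own: the paper gives no separate proof of Corollary \ref{only2cri}, asserting instead that it follows by re-running the induction that proves Theorem \ref{doublyexp} (upper bound via a seed block plus the renormalization Lemma \ref{elactual}, lower bound via the components process, the Aizenman--Lebowitz lemma and the Cerf--Cirillo coupling of slabs to independent $(d-1)$-dimensional processes) with the base case at dimension $m$ supplied by the hypothesis $L_c\left(\mathcal N_{a_m+i}^{a_1,\dots,a_m},p\right)=\exp\Theta(\xi_i(p))$ in place of the two-dimensional result \eqref{paso1}, which is exactly what you propose. Your only inaccuracies are cosmetic: the lower bound uses the Cerf--Cirillo crossing argument with cluster-diameter and cluster-size estimates (Propositions \ref{condition(c)} and \ref{condition(d)}) rather than a hierarchy-of-rectangles/perimeter bookkeeping, and propagation to a new layer requires the full $(d-1)$-dimensional threshold $a_{d-1}+\cdots+a_m+i$ inside that layer, not merely $i$ extra sites per column.
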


Note that in this corollary, it is an open problem to determine the functions $\xi_i(p)$ for all $m\ge 4$ and $i\in [a_{m-1}]$. While for $m=3$, we only know $\xi_i(p)$ for $i=1,2$ (except for $i=2$ when $a_3= a_1+a_2-1$) by \eqref{magnitudenP1}, and it is unknown for $i\in \{3,\dots, a_{2}\}$.

%%%%%%%%%%%%%%%%%%%%%%%%%%%%%%%%%%%%%%%%%%%%%%%%%%%%%%%%%%%%%%%%%%%%%%%%%%%%%%%%%%%%%%%%%%%%%%%%%%%%%%%%%%%%%%%%%%%%%%%%%%%%%

\subsection{The BSU model}
%Here we mention a classification for the bootstrap families in the model introduced by Bollob\'as, Smith and Uzzell depending on geometric properties related to discrete half spaces.
%We will mainly focus on  the general class of {\it critical} families. %, which include our families $\mathcal N_{c+1}^{a,b,c}$.
The model we study here is a special case of the following extremely general class of $d$-dimensional monotone cellular automata, which were introduced by Bollob\'as, Smith and Uzzell~\cite{BSU15}.

Let $\U=\{X_1,\dots,X_m\}$ be an arbitrary finite family of finite
subsets of $\Zd\setminus \{0\}$. We call $\U$ the {\it update family}, % of the process,
each $X\in\U$ an {\it update rule}, and the process itself $\U${\it-bootstrap percolation}.
Let $\Lambda$ be either $\Zd$ or $[L]^d$ or $\Zd_L$ (the $d$-dimensional torus of sidelength $L$).
Given a set $A\subset \Lambda$ of initially {\it infected} sites, set $A_0=A$, and define for each $t\ge 0$,
\[A_{t+1}=A_t\cup\{x\in\Lambda: x+X\subset A_t \text{ for some }X\in\U\}.\]
The set of eventually infected sites is the {\it closure} of $A$, denoted by
$\genA_\U=\bigcup_{t\ge 0}A_t$, and
we say that there is {\it percolation} when $\genA_\U=\Lambda$.

For instance, our $\mathcal N_{r}^{a_1,\dots,a_d}$-model is the same as $\mathcal N_{r}^{a_1,\dots,a_d}$-bootstrap percolation,
where $\mathcal N_{r}^{a_1,\dots,a_d}$ is the family consisting of all subsets of size $r$ of the neighbourhood 
$N_{a_1,\dots,a_d}$ in (\ref{neigh3}), and we denote $[A]=[A]_{\mathcal N_{r}^{a_1,\dots,a_d}}$.

% Given a positive integer $r$, a graph $G$ and an {\it initially infected} set $A\subset V(G)$  we perform $(G,r)$-bootstrap percolation by infecting healthy vertices $x$ having at least $r$ vertices  already infected in its neighborhood $N(x):=\{y\in V(G): (x,y)\in E(G)\}$, and infected vertices remain infected forever. Formally, set the initially infected set $A_0=A$, and define for each $t\ge 0$, \[A_{t+1}=A_t\cup\{x\in V(G): |N(x)\cap A_t|\ge r\}.\]
% \[A_{t+1}=A_t\cup\{x\in[L]^3: x+X\subset A_t \text{ for some }X\in\mathcal N_{r}^{a,b,c}\}.\]  The set of eventually infected sites is the {\it closure} of $A$, denoted by  $\genA:=\bigcup_{t\ge 0}A_t$.

 Let $S^{d-1}$ be the unit $(d-1)$-sphere and denote the discrete half space orthogonal to $u\in S^{d-1}$ as
 $\dhp^d:=\{x\in\Zd:\langle x,u\rangle <0\}$.
The {\it stable set} $\Ss=\Ss(\U)$ is the set of all $u\in S^{d-1}$
such that no rule $X\in\U$ is contained in $\dhp^d$. %$:=\{u\in S^{d-1}: \langle\dhp^d\rangle=\dhp^d\}$, 
Let $\mu$ denote the Lebesgue measure on $S^{d-1}$. The following classification of families was proposed in \cite{BSU15} for $d=2$ and extended to all dimensions in \cite{BDMS15}:
A family $\U$ is

\begin{itemize}
%  \item {\it supercritical} if there exists an open hemisphere in $S^{d-1}$ that is disjoint from $\Ss$;
 \item {\it subcritical} if for every hemisphere $\mathcal H \subset S^{d-1}$ we have $\mu(\mathcal H \cap\Ss)>0$.
 \item {\it critical} if there exists a hemisphere $\mathcal H \subset S^{d-1}$ such that $\mu(\mathcal H \cap\Ss)=0$, and
every open hemisphere in $S^{d-1}$ has non-empty intersection with $\Ss$;
 \item {\it supercritical} otherwise. %if every semicircle in $S^1$ has infinite intersection with $\Ss$.
 \end{itemize}
%  Observe that % $\mathcal N_{1+1+1}^{1,1,1}=\mathcal N_{3}^{3}$ and,

%Subcritical families exhibit a behavior which resembles models in classical site percolation, (see e.g. \cite{BBPS16,H19+}).

For dimension $d=2$, Bollob\'as, Duminil-Copin, Morris and Smith proved a universality result in \cite{BDMS15}, 
determining the critical length (with $A\sim \bigotimes_{v\in \Zd_L}$Ber$(p)$)
\[L_c(\U,p):= \min\{L\in\mathbbm N: \pp_p(\genA_\U=\Zd_L)\ge 1/2\},\]
up to a constant factor in the exponent for all two-dimensional critical families $\U$, which we can briefly state as follows.
\begin{theor}[Universality]
	Let $\U$ be a critical two-dimensional family. There exists a computable positive integer
	$\alpha=\alpha(\U)$ such that, as $p\to 0$, either
	\begin{equation}
	\log L_c(\U,p) =\Theta(p^{-\alpha}),
	\end{equation}
	or
	\begin{equation}
	\log L_c(\U,p) =\Theta(p^{-\alpha}(\log \tfrac 1p)^2).
	\end{equation}
\end{theor}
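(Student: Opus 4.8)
\medskip
\noindent\emph{Proof strategy.}
The plan is to extract from $\U$ a single combinatorial invariant $\alpha=\alpha(\U)$ together with a computable dichotomy, and then to prove matching (up to multiplicative constants) upper and lower bounds on $\log L_c(\U,p)$ whose leading order is $p^{-\alpha}$ or $p^{-\alpha}(\log\tfrac1p)^2$ according to that dichotomy.

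\emph{Step 1: combinatorial set-up.} First I would pin down the geometry of the stable set of a critical two-dimensional family: $\Ss$ is a finite union of closed arcs of $S^1$ (some possibly single points), every open semicircle meets $\Ss$, and some closed semicircle meets $\Ss$ in only finitely many points. To each rational direction $u\in S^1$ I would attach a \emph{difficulty} $\alpha(u)\in\N\cup\{\infty\}$ --- morally, the least number of sites that must be added to the discrete half-plane $\dhp^2$ (or to a thick enough strip abutting it) so that the closure advances strictly past $\dhp^2$ --- and show that criticality forces $\alpha(u)<\infty$ on $\Ss$, that $\alpha$ is bounded on $\Ss$, and that its super-level sets behave well under limits. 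One then sets $\alpha(\U):=\min_{\mathcal H}\max_{u\in\mathcal H}\alpha(u)$, the minimum over open semicircles $\mathcal H$, and checks that this is a positive integer and is computable from $\U$. Finally I would call $\U$ \emph{balanced} if some hardest open semicircle contains two directions of maximal difficulty that are spread to opposite sides of it, and \emph{unbalanced} otherwise; the two cases of the theorem will correspond, respectively, to $\log L_c(\U,p)=\Theta(p^{-\alpha})$ and $\log L_c(\U,p)=\Theta(p^{-\alpha}(\log\tfrac1p)^2)$ (consistency check: $\mathcal N_2^{1,1}$ is balanced, $\mathcal N_3^{1,2}$ is unbalanced).

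\emph{Step 2: lower bound.} To prove $\log L_c(\U,p)\ge c\,p^{-\alpha}$ in the balanced case and $\ge c\,p^{-\alpha}(\log\tfrac1p)^2$ in the unbalanced case, I would run an Aizenman--Lebowitz-type argument on $\Z_L^2$: if $\genA_\U=\Z_L^2$ then for every scale $1\le k\le L$ there is an internally spanned connected set of diameter between $k$ and $Ck$. Taking $k$ of the conjectured critical order, I would bound the probability that a fixed region of that size is internally spanned and sum over its $O(L^2)$ translates. For the internal-spanning bound I would fix a hardest open semicircle $\mathcal H$ and show that any internally spanned droplet, as it grows, must repeatedly cross the stable directions of $\mathcal H$; crossing a direction $u$ costs $\alpha(u)$ extra infected sites in a bounded window, an event of probability $O(p^{\alpha(u)})$, and --- in the balanced case --- enough roughly independent such crossings are forced (in the style of Cerf and Cirillo) that their joint probability, times $O(L^2)$, still tends to $0$ unless $L$ is at least exponential in $p^{-\alpha}$. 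In the unbalanced case the droplet additionally carries a net drift transverse to the unique hardest direction, so that beating the drift, rather than merely paying for isolated crossings, is required; this is what multiplies the exponent by $(\log\tfrac1p)^2$, and quantifying it via a one-dimensional random-walk / variational estimate in the spirit of Holroyd's analysis is the subtle point.

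\emph{Step 3: upper bound, and the main obstacle.} For the matching upper bounds I would exhibit an explicit percolation event: a constant-size seed droplet that (i) occurs somewhere in the box with probability tending to $1$ once $L$ equals the claimed critical size, and (ii) grows to infect everything via a multi-scale hierarchy in which, at each round, the droplet expands for free in the unstable (easy) directions and pays $O(p^{\alpha(u)})$ per unit length to cross each stable direction $u$ of a hardest semicircle; a crossing / second-moment estimate shows that over the full range of scales the extra infected sites called for by the hierarchy are present with high probability, and the balanced/unbalanced structure is precisely what makes the bookkeeping close with the stated budget. The main technical ingredient is a growth lemma bounding, for arbitrary update rules, the probability that a rectangle of prescribed dimensions grows to a prescribed larger rectangle. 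The genuinely hard part is to carry out Steps 2 and 3 \emph{uniformly over all critical} $\U$: for general update rules one must control interactions between different rules and different stable directions, show that the combinatorial difficulty really locates the bottleneck, and --- hardest of all --- recover the exact power of $\log\tfrac1p$ from the drift analysis in the lower bound and match it to the hierarchical construction. I expect this drift/variational estimate, which is exactly what separates the two cases of the theorem, to be the crux.
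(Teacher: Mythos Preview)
The paper does not prove this theorem at all: it is quoted in Section~1.2 as the main result of Bollob\'as, Duminil-Copin, Morris and Smith~\cite{BDMS15}, purely as background and motivation for the $d$-dimensional problems treated here. There is therefore no ``paper's own proof'' to compare your proposal against.

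For what it is worth, your outline is a faithful high-level summary of the actual argument in~\cite{BDMS15}: the difficulty function $\alpha(u)$ on stable directions, the definition $\alpha(\U)=\min_{\mathcal H}\max_{u\in\mathcal H}\alpha(u)$ over open semicircles, the balanced/unbalanced dichotomy governing the presence of the $(\log\tfrac1p)^2$ factor, the Aizenman--Lebowitz/hierarchy lower bound, and the seed-plus-growth upper bound are exactly the ingredients used there. But none of this is carried out, or needed, in the present paper, whose own results (Theorem~\ref{doublyexp} and Corollary~\ref{only2cri}) concern only the specific families $\mathcal N_r^{a_1,\dots,a_d}$ and are proved by the separate upper- and lower-bound arguments of Sections~\ref{SectionUpper12} and~\ref{SectionLowerComp}.
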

%\end{theor}
 
 Proving a universality result of this kind for higher dimensions is a challenging open problem.
 However, there is a weaker conjecture about all critical families and all $d\ge 3$, stated by the
 authors in \cite{BDMS15}.
 
\begin{conj} Let $\UU$ be a critical d-dimensional family. 
There exists $r\in\{2,\dots,d\}$ such that, as $p\to 0$
	\begin{equation}\label{r-cri}
	 \log_{(r-1)} L_c(\UU,p)=p^{-\Theta(1)},
	\end{equation}
% 	\begin{equation}\label{ncritico}
% 	\p_p(\genA = [L]^d)\to \begin{cases}
% 	1  \text{ if } L>\exp_{n-1}(p^{-C_1}), \\
% 	0  \text{ if } L<\exp_{n-1}(p^{-C_0}),
% 	\end{cases}
% 	\end{equation}
\end{conj}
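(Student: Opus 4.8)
The plan is to prove the conjecture by induction on the dimension $d$, driven by the \emph{hierarchy method} developed in~\cite{H03,BBM09,BBDM12}, with a dimension-reduction machinery in the spirit of Theorem~\ref{doublyexp} and Corollary~\ref{only2cri} supplying the inductive step that promotes one iterated exponential to the next. The first task is to read off the correct integer $r=r(\UU)\in\{2,\dots,d\}$ from the geometry of the stable set $\Ss=\Ss(\UU)\subset S^{d-1}$: since $\UU$ is critical, $\Ss$ is $\mu$-null on some closed hemisphere (so $\dim\Ss\le d-2$) yet meets every open hemisphere (so $\Ss\neq\emptyset$), and the natural guess, which one must show is well defined and computable, is $r=\dim\Ss+2$ --- equivalently $r-1$ is the length of a maximal chain of nested \emph{critical sections} obtained by intersecting $\Ss$ with a decreasing sequence of great subspheres that track its ``most stable'' part. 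One checks this against the known cases: $r=2$ for every two-dimensional critical family by the universality theorem of~\cite{BDMS15}; $r$ equal to the threshold for $\mathcal N_r^{1,\dots,1}$ by~\cite{AL88,CC99,CM02,BBDM12}; and $r=d$ for the anisotropic families of Theorem~\ref{doublyexp}, where indeed $\dim\Ss=d-2$.

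For the \textbf{upper bound} $\log_{(r-1)}L_c(\UU,p)\le p^{-C}$ the strategy is a recursive nucleation-and-growth construction. One fixes a nested family of ``droplets'' (polytopes whose facet normals lie in $\Ss$, at $r-1$ successive scales), shows that the smallest seed appears somewhere in a window of size $\exp(O(p^{-C}))$ with probability bounded away from $0$, and then iterates the following step: conditional on a grown $k$-dimensional droplet whose cross-section has already reached the $(k-1)$-dimensional critical scale, the $\UU$-process restricted to the adjacent layer is dominated below by a critical (or subcritical-but-$p$-boosted) $(k-1)$-dimensional family, so the droplet advances one layer along the relevant stable direction once that layer is internally spanned, an event of probability bounded below by a constant. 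Telescoping the $r-1$ genuinely lower-dimensional levels turns the critical scale in dimension $k$ into the exponential of the critical scale in dimension $k-1$, yielding $\exp_{(r-1)}(p^{-C})$ overall. This requires two ingredients generalising what is already used to prove Corollary~\ref{only2cri}: (i) a dimension-reduction lemma identifying the process induced on a stable subspace, and (ii) uniform finite-size criteria of the form ``a box of side $N$ carrying at least $g(N,p)$ infections is internally spanned with probability $\ge 1-e^{-cN}$'' for an explicit $g$.

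For the \textbf{lower bound} $\log_{(r-1)}L_c(\UU,p)\ge p^{-c}$ the plan is the hierarchy/entropy argument of~\cite{BBDM12,BDMS15}: every internally spanned droplet of diameter $L$ admits a \emph{hierarchy}, a rooted tree of nested ``good'' droplets with a bounded number of leaves (each a low-probability seed), along whose edges one pays a factor $e^{-p^{-c}}$ each time a genuinely hard stable direction is crossed; composing the $r-1$ hard levels gives probability at most $e^{-p^{-c}\ell}$ for crossing a length-$\ell$ droplet at the top level, and a union bound over the controlled number of hierarchies forces $\ell\le\exp_{(r-1)}(p^{-c})$. Here one needs the extremal counterparts of the upper-bound lemmas, an Aizenman--Lebowitz-type statement producing internally spanned droplets at every intermediate scale (as in~\cite{AL88}), and a careful count of the possible hierarchies.

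The \textbf{main obstacle} --- and the reason the conjecture remains open --- is the geometry of droplets for $d\ge3$. In two dimensions a droplet is a rectangle and $\Ss$ is a finite union of arcs, so the hard directions form a linear list and every hierarchy has bounded branching for trivial reasons; for $d\ge3$ the relevant droplets are genuine polytopes, $\Ss$ may have positive-dimensional components with intricate structure, and growth can proceed along several mixed faces at once. The two statements that no present technique handles in full generality are: (a) every internally spanned droplet admits a hierarchy of bounded combinatorial complexity; and (b) the families induced along successive stable subspaces remain \emph{critical}, so that the induction closes with the same $r$ at each step rather than degenerating into an intermediate type. A realistic intermediate target, and the natural next step beyond this paper, is to establish (a) and (b) for all families with ``polytopal'' stable set --- which covers every $\mathcal N_r^{a_1,\dots,a_d}$-model and, via Corollary~\ref{only2cri}, reduces the anisotropic case to the $2$-critical families of Definition~\ref{rcri}, exactly the reduction realised here.
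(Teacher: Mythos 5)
This statement is a \emph{conjecture} (due to the authors of \cite{BDMS15}): the paper offers no proof of it, and explicitly describes the surrounding universality question for $d\ge 3$ as a challenging open problem, proving it here only for the particular families $\mathcal N_{s_d+i}^{a_1,\dots,a_d}$ (Theorem \ref{doublyexp}) and reducing other anisotropic cases to the $2$-critical ones (Corollary \ref{only2cri}). What you have written is a research programme, not a proof: the two statements you label (a) and (b) --- bounded-complexity hierarchies for internally spanned polytopal droplets, and the persistence of criticality for the families induced on successive stable subspaces --- are precisely the missing ingredients, and you concede they are not established. So there is a genuine gap; indeed the entire argument is missing at those points, and no amount of telescoping of the $r-1$ levels can be carried out before they are supplied.

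Moreover, the one concrete step you do commit to is already wrong. Your proposed identification $r=\dim\Ss+2$ is falsified by results quoted in this paper: for example $\mathcal N_{3}^{1,1,2}$ (and more generally the intermediate cases of \eqref{viaSs}, where $\Ss$ contains circles $S^1_{i,k}$ together with isolated vectors) has a one-dimensional stable set, so your formula predicts $3$-criticality, yet by \eqref{2criti} and \eqref{magnitudenP1} the critical length for all $r\in\{c+1,\dots,c+b\}$ is singly exponential, i.e.\ the family is $2$-critical in the sense of Definition \ref{rcri}. The formula happens to match the isotropic models $\mathcal N_r^{1,\dots,1}$ and the $d$-critical regime of Theorem \ref{doublyexp}, but in general the criticality index is not a function of $\dim\Ss$ alone: it depends on the update rules induced on lower-dimensional sections (how hard growth is in the several stable directions), which is exactly why the paper determines it through a dimension-reduction induction (Proposition \ref{upper2}, Proposition \ref{lower1}, Corollary \ref{only2cri}) rather than by reading it off the stable set. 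Any correct attack on the conjecture would have to define $r(\UU)$ recursively through such induced families, and then face obstacles (a) and (b) above; as submitted, your proposal neither defines the right $r$ nor proves either bound.
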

\begin{defi}\label{rcri}
We say that a $d$-dimensional update family $\UU$ is $r$-{\it critical} if it satisfies condition (\ref{r-cri}) (so, roughly speaking, $\UU$ behaves like the classical $r$-neighbour model).

\end{defi}

% We believe that $\mathcal N_{r}^{a,b,c}$ satisfies (\ref{2cri}) if and only if $r\in\{c+1,\dots,c+b\}$ 
% (see Section 3.1) and, satisfies (\ref{3cri}) if and only if $r\in\{c+b+1,\dots,c+b+a\}$.
% \ \\

%It is easy to check

Observe that the family  $\mathcal N_{r}^{a_1,\dots,a_d}$ is critical if and only if 
 \[r\in\{a_d+1,\dots, a_1+\dots + a_d\}.\]

As an illustration, let us verify this for $d=3$: If $r>a+b+c$ then every $u\in S^2$ is in the stable set, since there is no rule of $\mathcal N_{r}^{a,b,c}$ contained in $\dhp^3$. Thus  $\Ss(\mathcal N_{r}^{a,b,c})=S^2$, and the model is subcritical. 
For each $i=1,2,3$, let us denote by \[S_i^1:=\{(u_1,u_2,u_3)\in S^{2}: u_i=0\}\] the unit circle contained in $S^2$ that is orthogonal to the vector $e_i$.\\
When $r\le c$, for every $u\notin S_3^1$ either $\{r'e_3: r'\in[r]\}$ or $\{r'e_3: -r'\in[r]\}$ is contained in $\dhp^3$, so $u$ is not in the stable set. Therefore
$\Ss(\mathcal N_{r}^{a,b,c})\subset S_3^1$, so the
hemisphere $\mathcal H_3$ pointing in the $e_3$-direction satisfies $\mathcal H_3 \cap\Ss=\varnothing$ and
$\mathcal N_{r}^{a,b,c}$ is supercritical.
\begin{figure}[ht]
	\centering
	\includegraphics[width=.9\textwidth]{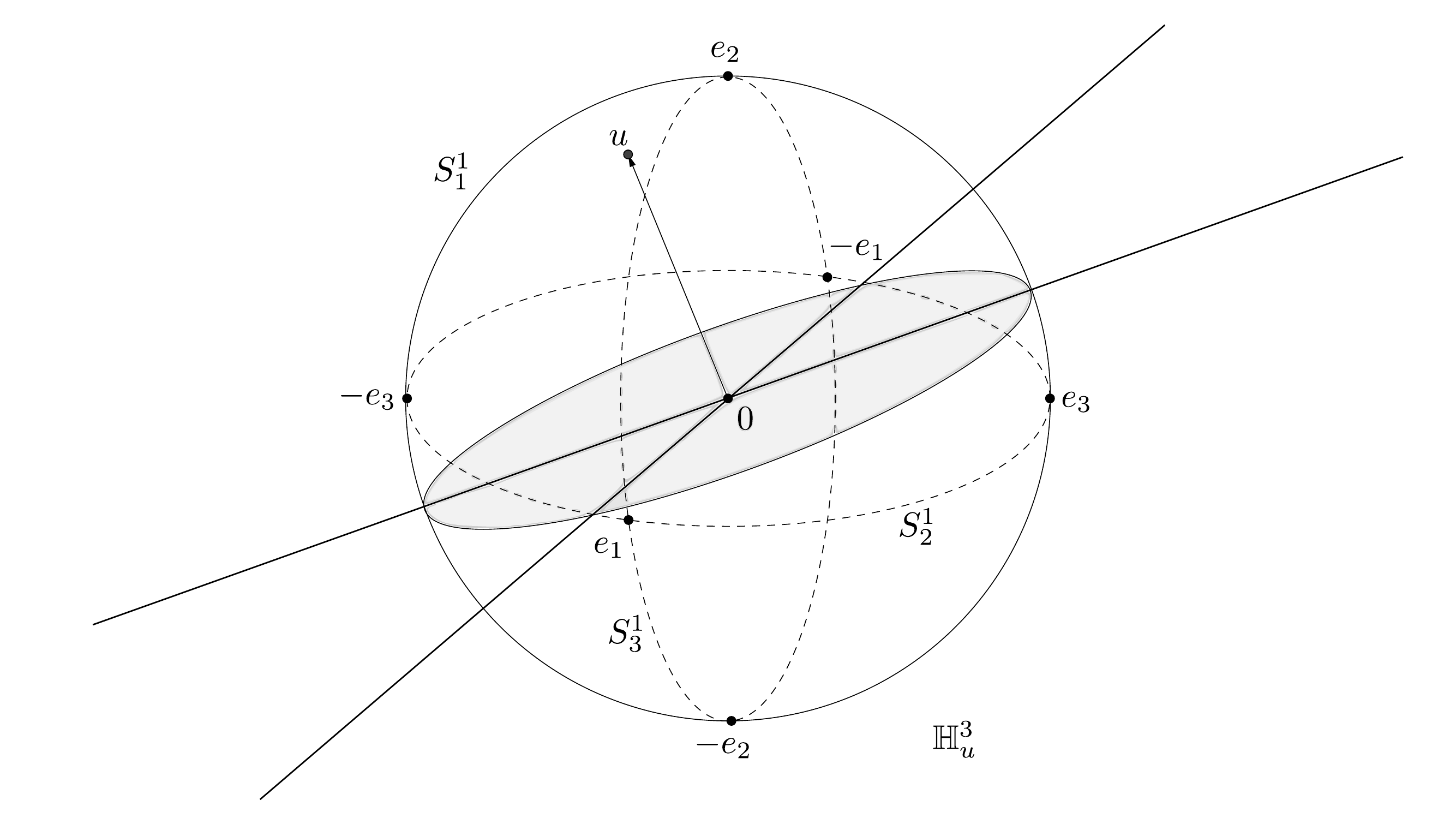}
	\caption{$S_1^1$ is the big circle, $S_2^1$ and $S_3^1$ are drawn with dashed 
	ellipses. The vector $u$ is outside $S_1^1\cup S_2^1\cup S_3^1$ and $\dhp^3$ contains all positive
	multiples of $e_1,-e_2$ and $e_3$.}
	\label{fig3dsphere}
\end{figure}\\
Finally, when $r\in\{c+1,\dots,a+b+c\}$, every canonical unit vector is in the stable set since $r>c\ge b\ge a$,
so every open hemisphere in $S^{2}$ intersects $\Ss(\mathcal N_{r}^{a,b,c})$.
Moreover, for each $u\notin S_1^1\cup S_2^1\cup S_3^1$, $\dhp^3$ intersects all three coordinate axis (see Figure \ref{fig3dsphere}),
hence there is a rule contained in $\dhp^3$ since $r\le a+b+c$.
It follows that
 $\Ss(\mathcal N_{r}^{a,b,c})\subset S_1^1\cup S_2^1\cup S_3^1$
and every hemisphere $\mathcal H \subset S^{2}$ satisfies $\mu(\mathcal H \cap\Ss)=0$, so $\mathcal N_{r}^{a,b,c}$ is critical, as claimed.

Indeed, a careful analysis would lead us to all possibilities for the stable set of the family $\mathcal N_{r}^{a_1,\dots, a_d}$ in dimensions $d\ge 3$. Some cases are:
\begin{equation}\label{viaSs}
\Ss(\mathcal N_{r}^{a_1,\dots, a_d})=
\begin{cases}
\{\pm e_1, \dots, \pm e_d\} & \textup{ for } a_d< r \le a_1+a_2,\\
S^1_{1,2} \cup \{\pm e_3,\dots, \pm e_d\} & \textup{ for } a_1+a_2< r \le a_1+a_3,\\
S^1_{1,2} \cup S^1_{1,3} \cup \{\pm e_4,\dots, \pm e_d\}  & \textup{ for } a_1+a_3< r \le a_2+a_3,\\
\hspace{.7cm} \vdots &  \\
  S^{d-2}_1 \cup S^{d-2}_2 \cup \cdots  \cup S^{d-2}_d & \textup{ for } a_2+\dots +a_d< r \le a_1+a_2+\dots +a_d,
\end{cases}
\end{equation}
where, $S^1_{i,k}$ is the unit circle contained in $S^{d-1}$ that contains vectors $e_i, e_k$, while $S^{d-2}_i \subset S^{d-1}$ is the $(d-2)$-sphere orthogonal to vector $e_i$.

For instance, if $d=3$ Note that by \eqref{2criti},  the family $\mathcal N_{r}^{a,b,c}$ is 2-critical for all $r\in\{c+1,\dots,c+b\}$ (first 3 cases in \eqref{viaSs}).
On the other hand, Theorem \ref{doublyexp} implies that $\mathcal N_{r}^{a,b,c}$ is 3-critical for all $r\in \{c+b+1,\dots,c+b+a\}$ (last case in \eqref{viaSs}).

%We are determining ... for $L_c\left(\mathcal N_{r}^{a,b,c},p\right)$ in a upcoming paper.

%%%%%%%%%%%%%%%%%%%%%%%%%%%%%%%%%%%%%%%%%%%%%%%%%%%%%%%%%%%%%%%%%%%%%%%%%%%%%%%%%%%%%%%%%%%%%%%%%%%%%%%%%%%%%%%%
%\subsection{Outline of the proof}
% In Section \ref{SectionLowerComp}, we prove 
%The proofs of all upper bounds are obtained by adapting standard arguments in bootstrap percolation (see Section \ref{SectionUpper12});
 
%We deal with the lower bounds by using...
%%%%%%%%%%%%%%%%%%%%%%%%%%%%%
%%%%%%%%%%%%%%%%%%%%%%%%%%%%%%%%%%%%%%%%%%%%%%%%%%%%%%%%%%%%%%%%%%%%%%

\section{Upper bounds}\label{SectionUpper12}
%From now until the end of Section \ref{SectionUpper12}, we set
%\begin{equation}   s:=r-c\in\{1,2\}.   \end{equation}
To prove upper bounds, it is enough to give one possible way of growing from $A$
step by step until we fill the whole of $[L]^d$. 

\begin{defi}\label{intfilled}
A {\it rectangular block} is a set of the form $R=[l_1]\times \cdots \times[l_d]\subset\Zd$. %(or $[l_1]\times[l_2]\subset\Zdd$).
We say that a rectangular block $R$ is {\it internally filled} if $R\subset [ A\cap R]$,
and denote this event by $I^\bullet(R)$.
\end{defi}
% We usually refer to the vertices in $\langle A\rangle$ as {\it infected} vertices. 

Given $d\ge 2$ and $ a_2\le \dots \le a_d$, let us denote \[%s:= a_1+a_2+\cdots + a_d,\ \ \ \ \ 
s_d:= a_2+a_3+\dots + a_d.\] %, \  \ \ \bf a:=\]

As usual in bootstrap percolation, we actually prove a stronger proposition.

\begin{prop}\label{upper2}
Given $d\ge 3$, fix $i\in [a_1]$ and  consider $\mathcal N_{s_d+i}^{a_1,\dots,a_d}$-bootstrap percolation.
There exists a constant $\Gamma=\Gamma(d, a_d)>0$ such that, if \[L=\exp_{(d-1)}\big(
\Gamma \lambda_i(p)\big),\] then
%\begin{equation}
 $\p_p\left(I^\bullet([L]^d)\right)\to 1,\ as\ p\to 0.$
%\end{equation}
\end{prop}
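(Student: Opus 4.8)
\textbf{Proof proposal for Proposition \ref{upper2}.}

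The plan is to induct on the dimension $d$, building an internally filled rectangular block in $[L]^d$ by stacking together internally filled slabs of dimension $d-1$, in the spirit of the Cerf--Cirillo and van Enter--Fey constructions. The base case $d=2$ is essentially \eqref{paso1}: with $r=a_2+i$ we have $L_c(\mathcal N_{a_2+i}^{a_1,a_2},p)=\exp(\Theta(\lambda_i(p)))$, so a rectangular block of side $\exp(\Gamma_2\lambda_i(p))$ is internally filled with probability tending to $1$ for a suitable constant $\Gamma_2$. (Strictly the proposition is stated for $d\ge 3$, so one really starts the induction at $d=3$ using the $d=2$ result as input.) For the inductive step, suppose we have already shown that in dimension $d-1$, with threshold $s_{d-1}+i = a_2+\dots+a_{d-1}+i$, a block of side $\ell_{d-1}:=\exp_{(d-2)}(\Gamma_{d-1}\lambda_i(p))$ is internally filled whp. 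We then grow in the $e_d$-direction as follows.

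First I would fix a ``seed'' region: a $(d-1)$-dimensional block $R_0$ of side $\ell_{d-1}$ living in a hyperplane $\{x_d = t_0\}$, which by the inductive hypothesis is internally filled whp; in fact, taking $L=\exp_{(d-1)}(\Gamma\lambda_i(p)) = \exp(\ell_{d-1}^{\,\Gamma/\Gamma_{d-1}})$ roughly, there are so many disjoint translates of such a block inside $[L]^d$ that at least one is internally filled with probability $\to 1$ by a second-moment / union argument over disjoint candidates. Once one $(d-1)$-dimensional slab of thickness $a_d$ (i.e. $a_d$ consecutive hyperplanes, each filled — note that filling $a_d$ adjacent copies of $R_0$ is again just a $(d-1)$-dimensional event, since the rules using $e_d$-neighbours are not yet needed) is fully infected, the key observation is that the process restricted to the next hyperplane $\{x_d = t_0+a_d\}$ now behaves like $\mathcal N^{a_1,\dots,a_{d-1}}_{\,?}$-bootstrap percolation with a \emph{reduced} threshold: each site in the new hyperplane already sees $a_d$ infected neighbours in the $-e_d$ direction, so it needs only $s_d+i - a_d = a_2+\dots+a_{d-1}+i = s_{d-1}+i$ further infections from within its own hyperplane. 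That is exactly the $(d-1)$-dimensional model in the inductive hypothesis. So to advance the infected slab by one hyperplane it suffices that the new hyperplane, on the footprint of the current infected block, contains an internally-filled-from-inside $(d-1)$-block — which happens with probability bounded below by a constant (not $\to1$, but bounded away from $0$) once the footprint has side $\ell_{d-1}$.

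Second, I would iterate: to fill a one-hyperplane-thick step we need a ``success'' with constant probability $q = q(p)$; since $\ell_{d-1}\to\infty$, in fact one can arrange $q(p)\to 1$ (or at least $q(p)$ bounded below), so the number of consecutive hyperplanes we can add before a failure is geometric, and to reach height $L$ we need roughly $L$ successes in a row — this is where the extra exponential in the height comes from. The careful bookkeeping: we want $(1-q(p))\cdot L \to 0$, i.e. $L \le 1/(1-q(p))$, and $1-q(p)$ is (by the $(d-1)$-dimensional bound) of order $\exp(-c\,\ell_{d-1})=\exp(-c\exp_{(d-2)}(\Gamma_{d-1}\lambda_i(p)))$, whose reciprocal is $\exp_{(d-1)}(\Theta(\lambda_i(p)))$ — matching the claimed $L$. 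One also has to grow in the $\pm e_d$ directions symmetrically and to check that the $(d-1)$-dimensional cross-section stays large enough (it only ever needs to stay of side $\ge \ell_{d-1}$, which is automatic since the cross-section never shrinks). Finally, once a full-height column of cross-section $\ell_{d-1}\times\cdots$ is infected, it is a routine matter (the cross-section is now ``infinite'' from the point of view of the remaining directions, or one repeats the same slab-growth argument in the other coordinates) to fill out all of $[L]^d$; I would phrase this as: an infected block which already spans $[L]$ in one coordinate and has side $\ge \ell_{d-1}\ge a_j$ in every other coordinate grows to fill $[L]^d$ deterministically is too strong, so instead one reuses the probabilistic slab argument in each remaining direction, each of which only costs another constant-or-$\to1$ probability and does not worsen the bound on $L$.

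The main obstacle I anticipate is making the ``constant probability per hyperplane'' step genuinely rigorous and quantitatively strong enough: one needs that, conditioned on the current configuration, a fresh hyperplane (whose randomness is independent of everything used so far, which is why working with disjoint regions / fresh sites matters) gets internally filled \emph{on the prescribed footprint} with probability $1-\exp(-\Theta(\ell_{d-1}))$, and the exponent here must be linear (not merely positive) in $\ell_{d-1}$ to produce the correct tower height for $L$. This requires a suitably strong form of the $(d-1)$-dimensional statement — not just ``probability $\to 1$'' but ``probability $\ge 1-\exp(-c\cdot(\text{side}))$'' on a block of the threshold side-length — which one gets by covering the footprint by $\Theta(\ell_{d-1})$ disjoint sub-blocks each of which is independently internally filled with probability bounded below, so that the probability \emph{all} of them fail is exponentially small in $\ell_{d-1}$. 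Propagating this strengthened induction hypothesis cleanly through all dimensions, together with tracking the constants $\Gamma_d$ (which will depend on $d$ and $a_d$ only, as claimed), is the technical heart of the argument; everything else is bookkeeping with the definitions of $\lambda_i(p)$ and $\exp_{(d-1)}$.
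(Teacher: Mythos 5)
Your growth phase (advance one hyperplane at a time, using that a site with $a_d$ infected neighbours behind it in the $e_d$-direction only needs $s_{d-1}+i$ further infections inside its hyperplane, with per-step failure probability $\exp(-\Omega(N))$ supplied by a renormalized form of the $(d-1)$-dimensional statement) is essentially the paper's argument, and the strengthened ``$1-\exp(-\Omega(N))$'' input you correctly identify as necessary is exactly Lemma \ref{elactual}. The gap is in your seed. You claim that a $(d-1)$-dimensional block $R_0$ in a single hyperplane is internally filled whp ``by the inductive hypothesis'', and that filling $a_d$ adjacent copies of $R_0$ ``is again just a $(d-1)$-dimensional event, since the rules using $e_d$-neighbours are not yet needed''. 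This has the logic backwards: the inductive hypothesis concerns the model $\mathcal N_{s_{d-1}+i}^{a_1,\dots,a_{d-1}}$, but the process actually running in $[L]^d$ has threshold $s_d+i=s_{d-1}+a_d+i$, and the reduction by $a_d$ is available to a site \emph{only after} the $a_d$ hyperplanes behind it are already infected. For the very first slab there is no such help, so the induced process inside an isolated hyperplane has threshold $s_d+i>a_1+\cdots+a_{d-1}$, i.e.\ it is a subcritical (in fact essentially non-growing) $(d-1)$-dimensional rule; already for $a_1=\cdots=a_d=1$, $i=1$, $d=3$ it is the $3$-out-of-$4$ rule in the plane, which cannot fill a large square from density-$p$ sites. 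So no hyperplane, and no slab of thickness $a_d$ that is not almost entirely contained in $A$, gets internally filled with any useful probability, and your seed never materializes.

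The paper's fix is exactly the step you tried to avoid: it takes the seed $R=[N]^{d-1}\times[a_d]$ to be \emph{entirely contained in $A$}, pays the price $p^{|R|}$, and compensates by the union bound over the roughly $L^d/|R|$ disjoint copies of $R$ inside $[L]^d$; the computation $\p_p(F_L^c)\le\exp\big(-p^{|R|}L^d/|R|\big)\to 0$ works precisely because $L=\exp_{(d-1)}(\Gamma\lambda_i(p))$ is one exponential taller than $|R|\approx\exp_{(d-2)}(O(\lambda_i(p)))$. In other words, in this construction the extra exponential in $L$ is what buys the seed, not (as in your accounting) the number of successful hyperplane steps; once the seed exists, the growth to $R'=[N]^{d-1}\times[M]$ and then to all of $[L]^d$ (using that each face of $R'$ is supercritical for the induced $(d-1)$-dimensional model in that direction) costs only $o(1)$ in probability. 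A second, smaller inaccuracy: your derivation of the per-hyperplane success probability $1-\exp(-\Theta(N))$ from ``at least one of $\Theta(N)$ disjoint sub-blocks fills'' only yields one filled sub-block, not the whole footprint; one needs the full renormalization statement of Lemma \ref{elactual} (complete filling with exponentially small failure probability) to run the step-by-step growth.
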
 
One key step in the proof of this proposition is to refine the upper bounds in (\ref{paso1}) for all dimensions, which can be done by using standard renormalization techniques.
\begin{lemma}[Renormalization]\label{elactual}
Given $d\ge 2$, fix $i\in [a_1]$ and  consider $\mathcal N_{s_d+i}^{a_1,\dots,a_d}$-bootstrap percolation.
There exists a constant
%Under $\mathcal N_{a_2+i}^{a_1,a_2}$-bootstrap percolation with $i\in [a_1]$, there exists a constant 
$N_0 = N_0(d, a_d)>0$ such that,
	\begin{equation}\label{1menosL2}
	\pp_p\left([ A]%_{\mathcal N_{a_2+i}^{a_1,a_2}} 
	= [N]^d\right)\ge
	1-\exp\left(-\Omega(N)\right),
	\end{equation}
	for all $p$ small enough and 
	$N\ge N_0$.
%	L_c(\mathcal N_{r}^{a,b},p)^{\Gamma'}$.
\end{lemma}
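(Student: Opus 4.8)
The plan is to bootstrap the threshold estimate \eqref{paso1} from two dimensions up to all dimensions by a standard renormalization argument, exploiting the fact that the neighbourhood $N_{a_1,\dots,a_d}$ restricted to the $(e_1,e_2)$-plane is exactly $N_{a_1,a_2}$, so the $d$-dimensional model ``contains'' the two-dimensional anisotropic model on every coordinate $2$-plane. First I would recall that by \eqref{paso1}, applied to the $\mathcal N_{a_2+i}^{a_1,a_2}$-model (note $s_d + i = a_2 + \dots + a_d + i$, and the relevant two-dimensional threshold parameter is $\lambda_{r-a_2}(p) = \lambda_i(p)$ since $r - a_2 = i$ when $d=2$), there is a constant $C$ such that a square of side $\ell_0 := \exp(C\lambda_i(p))$ is internally filled by a $p$-random set with probability bounded below by, say, $1-p$, for $p$ small. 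This already handles the base case $d=2$ of the lemma after a routine rescaling: partition $[N]^2$ into disjoint $\ell_0\times\ell_0$ blocks, declare a block \emph{good} if it is internally filled, observe that good blocks percolate under (ordinary) site percolation forces the closure to be all of $[N]^2$ once enough blocks are good, and use a standard supercritical-percolation / Peierls-type bound to get the $1-\exp(-\Omega(N))$ estimate. Actually, the cleanest route is the Aizenman--Lebowitz-style ``growth'' argument: if a block of side $2^{k}\ell_0$ fails to be internally filled, then some sub-block of side $2^{k-1}\ell_0$ fails, and one sums the resulting geometric series of failure probabilities; this gives the claimed bound directly with $N_0$ absorbing the constant $C$ and the dimension.

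For the inductive step $d-1 \to d$, I would slice $[N]^d$ into $N$ consecutive ``slabs'' $[N]^{d-1}\times\{j\}$, $j\in[N]$, each carrying an independent $p$-random set. By the induction hypothesis (applied in dimension $d-1$ to the $\mathcal N_{s_{d-1}+i}^{a_1,\dots,a_{d-1}}$-model, whose threshold function is again $\lambda_i(p)$ since $\lambda_i$ as defined in \eqref{fip} only depends on $a_1,a_2$), each slab is internally filled by its own random set with probability $1 - \exp(-\Omega(N))$; since $N \ge N_0$ and $\lambda_i(p)\to\infty$, a union bound over the $N$ slabs shows that \emph{all} slabs are internally filled, with probability $1-\exp(-\Omega(N))$ (here I take $N_0$ large enough that the $\Omega(N)$ in the induction hypothesis beats $\log N$). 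Once every slab $[N]^{d-1}\times\{j\}$ is fully infected, I claim the whole cube fills: a healthy site $x = (y,j)$ with $y\in[N]^{d-1}$ already sees all of its $2a_k$ neighbours in the $\pm e_k$ directions for $k<d$ (those lie in the same slab, which is infected), namely $a_2 + \dots + a_{d-1} + a_1 \ge s_{d-1} \ge s_d - a_d$... wait—more carefully: within slab $j$ the site $x$ has $a_1 + a_2 + \dots + a_{d-1}$ infected neighbours, which is $\ge a_2 + \dots + a_{d-1} + 1 = s_d - a_d + 1$, hence it needs only $a_d - 1$ more from the $\pm e_d$ direction; but the $a_d$ nearest neighbours of $x$ in the $+e_d$ direction (for $j \le N - a_d$) all lie in already-infected slabs, giving more than enough. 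Iterating this from $j=1$ upward (and using a symmetric argument near the boundary slabs, where one uses the $-e_d$ neighbours instead) infects all of $[N]^d$ deterministically once the slabs are filled. Combining the two events gives \eqref{1menosL2}.

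The main obstacle I anticipate is not any single estimate but getting the bookkeeping of constants uniform: the constant $N_0(d,a_d)$ must be chosen after the two-dimensional constant $C$ from \eqref{paso1} and must survive $d-2$ rounds of induction, each of which slightly degrades the $\Omega(N)$ rate via the union bound over $N$ slabs. I would handle this by proving the slightly stronger statement that the failure probability is at most $\exp(-cN)$ for a fixed $c = c(d,a_d) > 0$ and tracking how $c$ shrinks at each step (it suffices that at stage $d$ one has $c_d \le c_{d-1}/2$, say, which only costs a constant factor in $N_0$). A secondary subtlety is that \eqref{paso1} is stated as a two-sided $\Theta$-bound ``for $r = a_1+a_2$ and the proof extends to $a_2+1 \le r \le a_1+a_2$''; I only need the \emph{upper} bound direction (internal filling of a not-too-large box with high probability), so I would simply cite that half, and note that the extension to $r \in \{a_2+1,\dots,a_1+a_2\}$ with the parameter $\lambda_{r-a_2}$ is exactly what is needed to match $s_d + i$ here.
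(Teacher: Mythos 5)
Your base case ($d=2$) is in the same spirit as the paper, which likewise just invokes \eqref{paso1} together with standard renormalization techniques. The inductive step, however, contains a genuine gap. You slice $[N]^d$ into the unit slabs $[N]^{d-1}\times\{j\}$ and claim each slab is internally filled ``by its own random set'' using the induction hypothesis for the $\mathcal N_{s_{d-1}+i}^{a_1,\dots,a_{d-1}}$-model. But that $(d-1)$-dimensional process with the reduced threshold $s_{d-1}+i$ is not dominated by the actual $\mathcal N_{s_d+i}^{a_1,\dots,a_d}$-dynamics restricted to a slab: under the $d$-dimensional rule a site has at most $a_1+\cdots+a_{d-1}<s_d+i$ neighbours inside its own slab, so an isolated slab can never infect a single new site. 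The reduction of the threshold by $a_d$ is legitimate only once the slab has $a_d$ consecutive fully infected slabs next to it in the $e_d$-direction, i.e.\ only after a seed block $[N]^{d-1}\times[a_d]$ has been created --- and creating that seed is precisely the expensive event that pushes the $d$-dimensional critical length up by one more exponential. Indeed, if your slicing argument were valid it would give $L_c\big(\mathcal N_{s_d+i}^{a_1,\dots,a_d},p\big)\le \exp_{(d-2)}\big(O(\lambda_i(p))\big)$, contradicting the lower bound in Theorem \ref{doublyexp}. (Relatedly, your final ``deterministic growth from $j=1$ upward'' paragraph is vacuous: if every unit slab were already fully infected, the cube would be fully infected and there would be nothing left to grow.)

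The paper's proof has a different, interleaved structure which is exactly designed to supply that seed: Lemma \ref{elactual} in dimension $d-1$ is used to prove Proposition \ref{upper2} in dimension $d$ (one finds, inside the much larger box of side $\exp_{(d-1)}(\Gamma\lambda_i(p))$, a copy of $R=[N]^{d-1}\times[a_d]$ entirely contained in $A$; with this block behind it, each successive slab does run an effective $\mathcal N_{s_{d-1}+i}^{a_1,\dots,a_{d-1}}$-process and fills with probability $1-e^{-\Omega(N)}$, and then the resulting block grows through faces of supercritical size), and only then does Proposition \ref{upper2} in dimension $d$, combined with renormalization, yield Lemma \ref{elactual} in dimension $d$. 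To repair your argument you would have to replace the direct ``Lemma in $d-1$ implies Lemma in $d$'' slicing by this two-step loop (or some equivalent mechanism that first produces a fully infected $[N]^{d-1}\times[a_d]$ seed at the correct, $(d-1)$-times-exponential scale).
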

\begin{proof}
 For $d=2$, it follows from \eqref{paso1} and renormalization techniques (see e.g. \cite{Sch92}). For $d\ge 3$ it follows by induction on $d\ge 3$, meaning, Proposition \ref{upper2} with $d$ implies Lemma \ref{elactual} with $d$, while Lemma \ref{elactual} with $d-1$ implies Proposition \ref{upper2} with $d$ (see the proof of Proposition \ref{upper2} below).
\end{proof}

Now, we are ready to show the upper bound for 
%$L_c\left(\mathcal N_{c+b+i}^{a,b,c},p\right)$.
$L_c\big(\mathcal N_{s_d+i}^{a_1,\dots,a_d},p\big)$.
\begin{proof}[Proof of Proposition \ref{upper2}]
We use induction on $d\ge 3$. Assume that the proposition holds for all dimensions $2,3,\dots, d-1$.	Set $L=\exp_{(d-1)}\big(
\Gamma \lambda_i(p)\big)$, where $\Gamma>0$ is a large constant to be chosen.
Let $C$ be another large constant ($\Gamma$ will depend on $C$), $N=\exp_{(d-2)}(C\lambda_i(p))$, and
	consider the rectangular block
	\[R:=[N]^{d-1}\times [a_d]\subset[L]^d,\]
 and the events  $F_L:=\{\exists$ a copy of $R$ contained in $A\}$,
and $G_L:=\{[ A\cup R]=[L]^d\}$.
Note that $\p_p\left(I^\bullet([L]^d)\right)\ge \p_p(F_L)\p_p(G_L|R\subset A),$ so we need to show that $\p_p(F_L)\to 1$
and $\p_p(G_L|R\subset A)\to 1$, as $p\to 0$. 
 
Indeed, there are roughly $L^d/|R|$ disjoint (therefore independent) copies of $R$ 
	(which we label $Q_1,\dots, Q_{L^d/|R|}$),
	and $|R|	\le \exp_{(d-2)}\big({p^{-2i}}\big)$, so
\begin{align*}
	\p_p(F_L^c)& \le \p_p\left(\bigcap_i   
	(Q_i\not\subset A)\right) 
 \le \left[1-\p_p(R\subset A)\right]^{L^d/|R|} 
 \le \exp\big(-p^{|R|}L^d/|R|\big)\\
 & \le \exp\left(-\exp\left(de^{\Gamma \lambda_i(p)}-c e^{2C\lambda_i(p)}\log \tfrac{1}{p}-p^{-2i}\right)\right)
 \le \exp\left(-\exp\left(de^{\Gamma \lambda_i(p)}-e^{3C\lambda_i(p)}\right)\right).
	\end{align*}
By taking $\Gamma\ge 3C$ we conclude $\p_p(F_L)\to 1$, as $p\to 0$.

Next, set $M=\exp_{(d-2)}\big(p^{-2a_2}\big)$, and
	consider the rectangular block
	\[R':=[N]^{d-1}\times [M]\supset R.\]
In order to prove that $\p_p(G_L|R\subset A)\to 1$, as $p\to 0$ it is enough to verify that
\begin{equation}\label{RtoR'}
    \p_p(I^\bullet(R')|R\subset A)\to 1, \textup{ as } p\to 0,
    \end{equation}
then $R'$ will grow with high probability to fill the whole of $[L]^d$, since each of its $(d-1)$-faces is of supercritical size for the corresponding induced $(d-1)$-dimensional bootstrap process on that face.
More precisely, on the face orthogonal to the (easiest grow) $e_d$-direction with volume $N^{d-1}\ge \exp_{(d-2)}{(2C\lambda_i(p))}$, by induction hypothesis the corresponding critical length is $L_c\left(\mathcal N_{s_{d-1}+i}^{a_1,\dots, a_{d-1}},p\right)= \exp_{(d-2)}{\Theta(\lambda_i(p))}\le N^{d-1}$ if $C$ is large; on the face orthogonal to the (second hardest) $e_2$-direction with volume $MN^{d-2}\ge e^{p^{-2a_2}}$ (and shape such that it is much larger than a critical droplet in all $d-1$ directions) the corresponding critical length is $L_c\left(\mathcal N_{s_d-a_2+i}^{a_1,a_3,\dots, a_{d}},p\right)= \exp_{(d-2)}{O(\lambda_i(p))}\le MN^{d-2}$, and on the face orthogonal to the (hardest) $e_1$-direction with volume $MN^{d-2}$ as well the corresponding critical length is $L_c\left(\mathcal N_{s_d-a_1+i}^{a_2,\dots, a_{d}},p\right)= \exp_{(d-2)}{O(\lambda_{a_2-a_1+i}(p))}\le MN^{d-2}$.
%$L_c\left(\mathcal N_{c+b+i-a}^{b,c},p\right)= e^{O(p^{-(b+i-a)}(\log p)^2)}\le MN$.

Finally, by Lemma \ref{elactual} (applied with $d-1$), %for $p$ small %and $C$ large enough
%\[\p_p\big(I^\bullet ([N]^{d-1}\times [a_d+1])|R\subset A\big) \ge 1-\exp\left(- \Omega( N)\right),\]  thus
\[\p_p(I^\bullet(R')|R\subset A)\ge \left(1-e^{-\Omega(N)} \right)^M \ge 
\exp\left(-2Me^{-\Omega(N)}\right) \to 1,\]
as $p\to 0$, and \eqref{RtoR'} follows.
\end{proof}
	
%%%%%%%%%%%%%%%%%%%%%%%%%%%%%%%%%%%%%%%%%%%%%%%%%%%%%%%%%%%%%%%%%%%%%%%%%%%%%%%%%%%%%%%%%%%%%%%%%%%%%%%%%%%%%%%%%%%%%%%%
%%%%%%%%%%%%%%%%%%%%%%%%%%%%%%%%%%%%%%%%%%%%%%%%%%%%%%%%%%%%%%%%%%%%%%%%%%%%%%%%%%%%%%%%%%%%%%%%%%%%%%%%%%%%%%%%%%%%%%%%
%%%%%%%%%%%%%%%%%%%%%%%%%%%%%%%%%%%%%%%%%%%%%%%%%%%%%%%%%%%%%%%%%%%%%%%%%%%%%%%%%%%%%%%%%%%%%%%%%%%%%%%%%%%%%%%%%%%%%%%%

\section{Lower bounds}\label{SectionLowerComp}  

In this section we will prove the lower bounds, and the proof is an application of the {\it Cerf-Cirillo method} (see Section \ref{CCmethod}) and the {\it components process} (see Definition \ref{compro} below),
a variant of an algorithm introduced Bollob\'as, Duminil-Copin, Morris, and Smith \cite{BDMS15}. We will prove the following. 
\begin{prop}\label{lower1weaker}
Given $d\ge 3$, fix $i\in [a_1]$ and  consider $\mathcal N_{s_d+i}^{a_1,\dots,a_d}$-bootstrap percolation.
There exists a constant $\gamma=\gamma(d, a_d)>0$ such that, if \[L\le \exp_{(d-1)}\big(
\gamma \lambda_i(p)\big),\] then
%\begin{equation}
 $\p_p\left(I^\bullet([L]^d)\right)\to 0,\ as\ p\to 0.$
 % For each $i\in[a]$, there is a constant $\gamma=\gamma(c,i)>0$ such that, for  \[L<\exp\big(\exp\big( \gamma f_{i}(p)\big)\big) ,\]
 %$\p_p(I^{\bullet}([L]^3))\to 0,$ as $p\to 0$, under $\mathcal N_{c+b+i}^{a,b,c}$-bootstrap percolation.
\end{prop}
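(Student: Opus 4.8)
The plan is to establish the lower bound by showing that the probability of internally filling $[L]^d$ is small, using the Cerf--Cirillo method together with an iterative (dimension-reducing) argument. The intuition is that in order for $[L]^d$ to be internally filled in the $\mathcal N_{s_d+i}^{a_1,\dots,a_d}$-model, a ``critical droplet'' must be created; growth in the hardest direction $e_1$ is essentially controlled by the induced $(d-1)$-dimensional process $\mathcal N_{s_d-a_1+i}^{a_2,\dots,a_d}$ (or in the $e_2$-direction by $\mathcal N_{s_d-a_2+i}^{a_1,a_3,\dots,a_d}$) acting on hyperplane slices, and each such lower-dimensional process already requires a length that is a $(d-2)$-times iterated exponential in $\lambda_i(p)$. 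So the process only ``sees'' a column of $[L]^d$ of bounded width in the $e_1$-direction as effectively being filled, and to propagate that column across length $L$ one needs $L$ to be at most a single exponential of the $(d-2)$-times iterated exponential, i.e. a $(d-1)$-times iterated exponential. The matching lower bound of Proposition \ref{lower1weaker} quantifies this.

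The key steps, in order, are as follows. First I would set up the notion of a \emph{crossed} (or internally spanned) rectangular block and the hierarchy/components decomposition: by the standard Aizenman--Lebowitz-type lemma (adapted to this anisotropic setting, as in \cite{BDMS15, DB20}), if $[L]^d$ is internally filled then for every scale between a constant and $L$ there is an internally filled block of roughly that scale in at least one of the $d$ directions. Second, I would run induction on $d$: assuming the lower bound (equivalently, the statement of Lemma \ref{elactual}-type estimates and Proposition \ref{lower1weaker}) in dimension $d-1$, one shows that an internally filled block of the form $[K]^{d-1}\times[\ell]$ with $\ell$ large forces each $(d-1)$-dimensional slice orthogonal to $e_d$ to be internally filled for the induced process, hence (by the $(d-1)$-dimensional lower bound) forces $K\ge \exp_{(d-2)}(c\lambda_i(p))$ for a constant $c>0$. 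Third — and this is the heart of the Cerf--Cirillo argument — I would bound the probability that such a block grows in the $e_1$- (and $e_2$-) direction: growth by one ``step'' in the $e_1$-direction across a hyperplane requires finding enough infected sites in a thin slab, and concatenating these steps over a distance comparable to $\log_{(d-2)} L$ gives, via a union bound over the (few) choices in the hierarchy and independence across disjoint slabs, a bound of the form $\exp(-c\log_{(d-2)}(L)\cdot g(p))$ where $g(p)$ is of order $\lambda_i(p)^{-1}$ up to constants (this is where the $\lambda_i(p)$, with its $(\log 1/p)^2$ factor when $a_2>a_1$, enters). Finally, choosing $\gamma$ small enough so that $L\le \exp_{(d-1)}(\gamma\lambda_i(p))$ makes this probability tend to $0$, completes the induction; the base case $d=3$ reduces (via the $e_1$ and $e_2$ directions) to the two-dimensional estimates behind \eqref{paso1}, i.e. the van Enter--Fey / Duminil-Copin--van Enter bounds.

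I expect the main obstacle to be the third step: carefully controlling, with the correct constant in the exponent, the cost of ``crossing'' in the hard directions, because one must simultaneously (i) handle the hierarchy/components bookkeeping so that the number of configurations summed over is only exponential in $\log_{(d-2)} L$ rather than in $L$ itself, (ii) ensure the disjoint-slab independence is genuine (the update rules reach distance $a_d$, so slabs must be spaced accordingly and the lost probability absorbed), and (iii) match the two regimes of $\lambda_i(p)$ in \eqref{fip} — the extra $(\log 1/p)^2$ when $a_2>a_1$ arises precisely because a single crossing step in the $e_1$-direction, governed by the induced model with $a_2>a_1$, is more expensive, and getting this bookkeeping exactly right (rather than off by a logarithm) is the delicate point. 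A secondary technical nuisance is that the induced $(d-1)$-dimensional processes on the two relevant faces have \emph{different} parameter tuples ($(a_2,\dots,a_d)$ versus $(a_1,a_3,\dots,a_d)$ and thresholds $s_d-a_1+i$ versus $s_d-a_2+i$), so one has to verify that both induction hypotheses apply with compatible constants; but this is routine once the framework is in place.
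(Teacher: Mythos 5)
Your proposal uses the same general toolkit as the paper (an Aizenman--Lebowitz-type lemma, induction on $d$, and the Cerf--Cirillo method), but the step on which you hang the induction is wrong as stated. You claim that an internally filled block $[K]^{d-1}\times[\ell]$ ``forces each $(d-1)$-dimensional slice orthogonal to $e_d$ to be internally filled for the induced process''. This is false: a vertex in a given slice can use up to $a_d$ infected neighbours lying in neighbouring slices, so the slices cooperate during the $d$-dimensional dynamics, and no single slice need be internally filled by the induced $(d-1)$-dimensional rule. Handling exactly this cooperation is the whole point of the Cerf--Cirillo machinery in the paper: one first reduces (via Lemma \ref{ALlema0}) to an internally spanned block of diameter $N\approx\log L$, takes a strongly connected crossing of that block in the \emph{easiest} direction $e_d$, partitions into slabs of thickness $2a_d$, and couples the components of $[A\cap B_j]$ with \emph{independent} $\mathcal N_{s_{d-1}+i}^{a_1,\dots,a_{d-1}}$-processes at the boosted density $2a_dp$; the event that is then excluded, via Lemma \ref{mainLema}, is the existence of a connected path across $\Theta(\log L)$ such slabs --- not internal filling of slices. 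Note also that the induced family relevant to the lower bound is $\mathcal N_{s_{d-1}+i}^{a_1,\dots,a_{d-1}}$ (the same family in dimension $d-1$, which is what lets the induction close), whereas the families $\mathcal N_{s_d-a_1+i}^{a_2,\dots,a_d}$ and $\mathcal N_{s_d-a_2+i}^{a_1,a_3,\dots,a_d}$ that you organize the argument around are the ones the paper uses for the \emph{upper} bound.

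A second gap is quantitative. Your claimed bound $\exp\big(-c\,\log_{(d-2)}(L)\cdot g(p)\big)$ with $g(p)\asymp\lambda_i(p)^{-1}$ per crossing step is not what the argument delivers and is not substantiated in your sketch. In the paper, each slab crossed contributes only a constant factor $\alpha<1/2$, giving $\p(X(\mathcal P))\le\alpha^{\lfloor N/4a_d\rfloor}N^{d-1}\le L^{-2d}$, which beats the union bound over the at most polynomially-in-$L$ many choices of the spanned block; the quantity $\lambda_i(p)$ enters not as a per-step cost but through verifying the hypotheses of Lemma \ref{mainLema}, namely the cluster-diameter bound (Proposition \ref{condition(c)}) and the expected-cluster-size bound (Proposition \ref{condition(d)}) for the $(d-1)$-dimensional slab process at the subcritical scale $N\le\exp_{(d-2)}(\gamma\lambda_i(p))$. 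These are proved by induction with an explicit anisotropic two-dimensional computation as the base case $d=3$ (this is where the factor $p^{-i}(\log p)^2$ arises, from crossing a rectangle of height $\delta p^{-i}\log\frac1p$ in the hard direction); appealing to the critical-length statement \eqref{paso1} for the base case, as you do, does not by itself supply these cluster estimates. Finally, the paper in fact proves the stronger diameter statement (Proposition \ref{lower1}) and deduces Proposition \ref{lower1weaker} by monotonicity --- a structural point your outline omits, though that part is minor compared with the two issues above.
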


In order to show this proposition, we need to introduce a notion about rectangular blocks
which is an approximation to being internally filled, and
this notion requires a strong concept of connectedness; we define both concepts in the following.
%Consider the superset of $N_{a,b,c}$ (see  (\ref{neigh3})) given by
 %\[\bar N_{a,b,c}:= \{(u_1,u_2,u_3)\in\Zdt: |u_1|\le a, |u_2|\le b, |u_3|\le c  \textup{ and } u_1u_2u_3=0\}.\]
\begin{defi}\label{strconn}
For $d\ge 1$, let $G^d=(V,E)$ be the graph with vertex set $[L]^d$ and edge set given by
$E=\{uv: \|u-v\|_\infty \le 2a_d\}$.   %(x,y)\in [L]^3 \times [L]^3
 We say that a set $S\subset [L]^d$ is {\it $d$-strongly connected}
 if it is connected in the graph $G^d$.
\end{defi}
% \begin{figure}[ht]
% \includegraphics[width=.5\textwidth]{Nbarabc} 
% \caption{Strongly connected neighborhood} \label{Nbarabc}
% \end{figure}

\begin{defi}\label{intspa}
	% \begin{enumerate}
	%  \item 
% 	Given an initial set $A\subset [L]^d$,
We say that the rectangular block $R\subset [L]^d$ is 
{\it internally spanned} by $A$, if there exists a strongly connected set $S\subset [ A\cap R]$
such that $R$ is the smallest rectangular block containing $S$. We denote this event by $I^{\times}(R)$.
%  \item If $[l_1]\times[l_2]\times[l_3]$ is the smallest rectangle containing a set $S$, we define $long(S)=l_3$.
% \end{enumerate}
\end{defi}
Note that when a rectangular block is internally filled then it is also internally spanned.
Now, given $T\subset [L]^d$, let us denote by long$(T)$ the largest sidelength of the smallest rectangle containing $T$, and let
\[\textup{diam}(T):=\max\{\textup{long}(S): S\subset T, S \textup{ strongly connected}\}.\]
Since $I^{\bullet}([L]^d)$ is an increasing event,
Proposition \ref{lower1weaker} is a consequence of the following result.
\begin{prop}\label{lower1}
Given $d\ge 3$, fix $i\in [a_1]$ and  consider $\mathcal N_{s_d+i}^{a_1,\dots,a_d}$-bootstrap percolation.
There exists a constant $\gamma=\gamma(d, a_d)>0$ such that, if \[L=\exp_{(d-1)}\big(
\gamma \lambda_i(p)\big),\] then, as $p\to 0.$
 
\[\p\big(\textup{diam}([A])\ge \log L\big) \le L^{-1}.\]
\end{prop}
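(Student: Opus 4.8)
\textbf{Proof proposal for Proposition \ref{lower1}.}

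The plan is to run an induction on the dimension $d$, using the $(d-1)$-dimensional statement (together with the two-dimensional input \eqref{paso1}--\eqref{fip}) as the base of a hierarchical argument, and to implement the \emph{Cerf--Cirillo method} as announced in Section \ref{SectionLowerComp}. The starting point is to observe that if $\mathrm{diam}([A]) \ge \log L$, then some strongly connected subset $S \subset [A]$ has $\mathrm{long}(S)\ge \log L$, hence there exists a rectangular block $R$ with $\mathrm{long}(R)\in[\log L, 2\log L]$ (say) that is internally spanned: $I^\times(R)$ holds. So it suffices to bound $\sum_R \p_p(I^\times(R))$ over all such blocks, and since the number of blocks of bounded long is at most $L^{O(1)}\cdot(\text{poly}(\log L))^{d}$, we must show $\p_p(I^\times(R)) \le L^{-1-\Omega(1)}$ for every individual block $R$ with one side of length $\approx\log L$.

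The heart of the argument is to estimate $\p_p(I^\times(R))$ for a fixed block $R$ via the \emph{components process}: one grows the infected set inside $R$ while tracking a collection of strongly connected ``components'', recording at each step either a merge of two components or the absorption of a fresh infected site. Following Cerf--Cirillo, the key quantity is the total number of sites whose infection we must ``pay'' for along the way, and the probabilistic cost of seeing all of them present in $A$. The geometry here is anisotropic: growth in the $e_d$-direction is cheapest and growth in the $e_1$-direction is hardest, so the crossing of $R$ in its long direction must traverse, at some scale, a ``bottleneck'' whose cost is governed by the induced $(d-1)$-dimensional process on a face orthogonal to one of the coordinate directions. Concretely, I would slice $R$ by hyperplanes perpendicular to its long axis into $\Theta(\log L / \ell)$ slabs of thickness $\ell$ (with $\ell$ chosen of order $\exp_{(d-2)}(\Theta(\lambda_i(p)))$, a ``critical'' $(d-1)$-dimensional scale), apply the $(d-1)$-dimensional version of the proposition (the induction hypothesis) to control the probability that any one slab already contains a strongly connected set spanning it in the transverse directions, and conclude that the process cannot cross from one slab to the next without either a rare local event or the expensive $(d-1)$-dimensional nucleation. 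Multiplying the per-step cost $e^{-\Omega(\lambda_i(p))}$ (or the appropriate $\exp_{(d-2)}$-iterated version thereof) over the $\Theta(\log L/\ell)$ steps, and recalling $\log L = \exp_{(d-2)}(\gamma\lambda_i(p))$, one gets a bound of the form $\exp(-c\,\gamma\,\exp_{(d-2)}(\lambda_i(p))\cdot(\dots))$, which beats $L^{-1}=\exp(-\exp_{(d-2)}(\gamma\lambda_i(p)))$ once $\gamma$ is taken small enough — the smallness of $\gamma$ is exactly what makes the union bound over $\approx L^{O(1)}$ choices of $R$ survivable while still allowing the per-crossing cost to dominate.

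The main obstacle, and the place where the real work sits, is the \emph{deterministic geometry lemma} underlying the components process: one must show that whenever a strongly connected set $S\subset[A\cap R]$ spans $R$ in the long direction, the growth of $[A\cap R]$ — analyzed via the merging history of components — is forced to exhibit a linear-in-$(\log L/\ell)$ number of disjoint ``difficult'' events, each costing a full $(d-1)$-dimensional nucleation or a polynomially-rare anisotropic step in the hard directions, \emph{without double-counting} the randomness of $A$. Making the events on disjoint regions genuinely independent (so that the product bound is legitimate) requires a careful choice of which sites to reveal and a clean separation between the ``hierarchy'' of components at the coarse scale $\ell$ and the fine-scale growth inside each slab; this is precisely the subtlety the Cerf--Cirillo method is designed to handle, and I would follow its bookkeeping — choosing the thresholds for ``large'' versus ``small'' components, and the order in which components are merged — so that the transverse-spanning probability supplied by the induction hypothesis plugs in cleanly at each merge. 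The two-dimensional and the $d=3$ bottleneck cases of \eqref{magnitudenP1} should be invoked to identify the correct cheapest induced process on each face, and the final optimization over $\gamma$ (relative to the implied constants in the $(d-1)$-dimensional statement) closes the induction.
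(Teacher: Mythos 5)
Your opening move (if $\mathrm{diam}([A])\ge\log L$, extract via the components process an internally spanned block $R$ of diameter comparable to $\log L$ and union-bound over the $L^{O(1)}$ choices of $R$) is exactly the paper's Lemma \ref{ALlema0} step. But the mechanism you propose for bounding the crossing probability of a single $R$ diverges from the paper's and has a genuine gap. The paper does \emph{not} slice $R$ into slabs of critical $(d-1)$-dimensional thickness $\ell=\exp_{(d-2)}(\Theta(\lambda_i(p)))$; it slices along the crossing direction into slabs $B_j$ of \emph{constant} thickness $2a_d$, and the essential point is a domination/coupling: since every vertex of $B_j$ has at most $a_d$ neighbours outside $B_j$, the trace of the $d$-dimensional dynamics inside $B_j$ is dominated by an independent $(d-1)$-dimensional $\mathcal N_{s_{d-1}+i}^{a_1,\dots,a_{d-1}}$-process on $[N]^{d-1}$ at density $2a_dp$. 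With your thick slabs this domination is false (adjacent slabs feed each other, so the within-slab closure is not a $(d-1)$-dimensional process), and the "independent per-slab cost" you multiply is precisely the double-counting issue you flag and never resolve. There is also a scale problem: with $\log L=\exp_{(d-2)}(\gamma\lambda_i(p))$ and $\gamma$ small, a slab thickness of order $\exp_{(d-2)}(\Theta(\lambda_i(p)))$ can exceed $\log L$, leaving $O(1)$ slabs and no product bound at all.

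The second, more serious gap is that your per-slab cost is wrong in kind: a longitudinal crossing of a slab does \emph{not} force a $(d-1)$-dimensional nucleation or transverse spanning; it can be produced by a chain of many small clusters, each individually cheap, threading through consecutive slabs. Ruling out exactly this chaining is the entire content of the Cerf--Cirillo Lemma \ref{mainLema}: one encodes per-slab connectivity as a two-coloured admissible graph, with ``good'' edges given by clusters of diameter at most $(\log N)^{1+\varepsilon}$ and ``bad'' edges witnessing a large cluster, and the two quantitative inputs are Proposition \ref{condition(c)} (a bad edge has probability at most $N^{-\varepsilon}$ at the subcritical sidelength $N\le\exp_{(d-2)}(\gamma\lambda_i(p))$) and Proposition \ref{condition(d)} (expected good degree $o(1)$); these are themselves proved by induction on $d$ intertwined with the present proposition, and Lemma \ref{mainLema} then yields $\p_p(X(\mathcal P))\le\alpha^{\lfloor N/4a_d\rfloor}N^{d-1}\le L^{-2d}$, after which the union bound over $R$ finishes. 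Your write-up names this machinery but explicitly defers it (``I would follow its bookkeeping''), so the estimate you actually need --- per-slab cost together with legitimate independence across slabs --- is asserted rather than proved, and as stated (a nucleation cost $e^{-\Omega(\lambda_i(p))}$ per critical-thickness slab) it is not the correct statement to aim for.
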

The rest of this paper is devoted to the proof of this result.
% We will show that the probability that $[L]^3$ is internally filled goes to $0$ as $p\to 0$ for
%  \[L<\begin{cases}
%  \exp(\gamma p^{-\frac 12}\sqrt{\log\tfrac{1}{p}}), & \text{if }c=b,\\
%  \exp(\gamma p^{-\frac 12}(\log\tfrac{1}{p})^{\frac 32}), & \text{if } c\in\{b+1,\dots,b+a-1\}.
%  \end{cases} \]
%  The advantage of considering this definition  is that it is more flexible, in particular we will be able to exploit the following simple remark: 
% The first step is to visualize our model as a particular case of bootstrap percolation on general graphs $G=(V(G),E(G))$ with threshold $r=c+1$, which we define in the following:
% \begin{defi}
%   Given $r\ge 3$, a countable graph $G$ and an initially infected set $A\subset V(G)$ we perform $(G,r)$-b.p. by infecting healthy vertices $x$ having at least $r$ vertices
%  already infected in its neighborhood $N(x):=\{y\in V(G): (x,y)\in E(G)\}$.
% \end{defi}
\subsection{The components process}
The following is an adaptation of the spanning algorithm in \cite[Section 6.2]{BDMS15}.
% introduced in \cite{Msurvey1777}... for general $\UU$-bootstrap percolation. 
We will use it to show
an Aizenman-Lebowitz-type lemma, % (see for instance, Lemma \ref{AL}),
which says that when a rectangular block
is internally spanned, then it contains internally spanned rectangular blocks of all
intermediate sizes %along the $e_3$-direction
(see Lemmas \ref{ALlema0} and \ref{ALlema1} below).
\begin{defi}[The components $d$-process]\label{compro}
%  Consider the graph $G$ given by Definition \ref{strconn}, 
Consider $\mathcal N_r^{a_1,\dots, a_d}$-bootstrap percolation on $[L]^d$ with $r>a_d$.
 Let $A=\{v_1,\dots,v_{|A|}\}\subset [L]^d$.
 Set $\mathcal R:=\{S_1,\dots,S_{|A|}\}$, where $S_i=\{v_i\}$ for each $i=1,\dots,|A|$.
 Then repeat the following steps until STOP:
 \begin{enumerate}
  \item If there exist distinct sets $S_{1},S_{2}\in\mathcal R$ such that
  \[ S_{1}\cup S_{2} \]
  is strongly connected, then remove them from $\mathcal R$,
  and replace by $[ S_{1}\cup S_{2} ]$.
  \item If there do not exist such sets in $\mathcal R$, then STOP.
 \end{enumerate}
\end{defi}
\begin{rema}\label{stopfinitetime}
We highlight that the condition $r>a_d$ (equivalent to $\mathcal N_r^{a_1,\dots, a_d}$ is not supercritical) guarantees that at any stage of the component process, if 
$S=[ S_{1}\cup S_{2} ]$ is added to the collection
$\mathcal R$, then the smallest rectangular block (which is finite) containing $S$ is internally spanned.
\end{rema}
 Since $G^d$ is finite, the process stops in finite time; so that we can consider the final
 collection $\mathcal R'$ and set
 $V(\mathcal R')=\bigcup\limits_{S\in\mathcal R'}S$.

\begin{lemma}\label{final=gen}
  $V(\mathcal R')=\genA$.
\end{lemma}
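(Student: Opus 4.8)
The plan is to show the two inclusions $\genA \subseteq V(\mathcal R')$ and $V(\mathcal R') \subseteq \genA$ separately, using the key structural facts already in hand: Remark \ref{stopfinitetime} (which guarantees every set produced is the closure of a strongly connected set, hence sits inside its spanning rectangular block, which is internally spanned) and the basic monotonicity of the closure operator.

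\textbf{The inclusion $V(\mathcal R') \subseteq \genA$.} I would prove by induction on the number of merge steps that every set $S$ ever placed in the collection $\mathcal R$ satisfies $S \subseteq \genA$. At initialization each $S_i = \{v_i\}$ with $v_i \in A = A_0 \subseteq \genA$. For the inductive step, suppose $S_1, S_2 \in \mathcal R$ with $S_1 \cup S_2$ strongly connected, and by hypothesis $S_1, S_2 \subseteq \genA$. Then $S_1 \cup S_2 \subseteq \genA$, and since $\genA$ is closed under the $\mathcal N_r^{a_1,\dots,a_d}$-bootstrap rule, we get $[S_1 \cup S_2] \subseteq [\genA] = \genA$. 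Thus the invariant is preserved, and in particular every $S \in \mathcal R'$ lies in $\genA$, so $V(\mathcal R') \subseteq \genA$.

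\textbf{The inclusion $\genA \subseteq V(\mathcal R')$.} Here I would argue that $V(\mathcal R')$ is itself a closed set containing $A$, so it must contain $\genA = [A]$. It contains $A$ since at the start every singleton $\{v_i\}$ is in $\mathcal R$ and merges only enlarge the union $V(\mathcal R)$ (replacing $S_1, S_2$ by $[S_1 \cup S_2] \supseteq S_1 \cup S_2$), so $V(\mathcal R)$ is nondecreasing along the process and $A \subseteq V(\mathcal R') $. For closedness: suppose $x \in [L]^d$ has $x + X \subseteq V(\mathcal R')$ for some $X \in \U = \mathcal N_r^{a_1,\dots,a_d}$. Since $X \subseteq N_{a_1,\dots,a_d}$, every point of $x+X$ lies within $\ell^\infty$-distance $a_d$ of $x$; I claim all points of $x+X$ actually belong to a \emph{single} $S \in \mathcal R'$. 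Indeed, if two of them lay in distinct sets $S', S'' \in \mathcal R'$, then $S' \cup S''$ would be strongly connected (any two points of $x+X$ are within $\ell^\infty$-distance $2a_d$ of each other, so $E$ connects them, and this path links $S'$ to $S''$), contradicting that $\mathcal R'$ is a \emph{final} collection on which step (1) cannot be applied. Hence $x + X \subseteq S$ for some $S \in \mathcal R'$; since $S$ is a closure (of a strongly connected subset of $[A\cap \cdot]$, by Remark \ref{stopfinitetime}, or simply built up by repeated closures), $S = [S]$, so $x \in [S] = S \subseteq V(\mathcal R')$. Therefore $V(\mathcal R')$ is closed and contains $A$, giving $\genA \subseteq V(\mathcal R')$.

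\textbf{Main obstacle.} The delicate point is the closedness argument in the second inclusion, specifically the claim that a single update rule $X$ placed entirely inside $V(\mathcal R')$ must in fact lie inside one component $S$. This is exactly where the hypothesis $r > a_d$ and Definition \ref{strconn} with threshold $2a_d$ pull their weight: since each update rule $X$ has diameter at most $2a_d$ in $\ell^\infty$, it is strongly connected as a set, which forces the components meeting it to merge — but the process has already stopped, so only one component can meet it. I would also need to note that each $S \in \mathcal R'$ equals $[S]$ (it is obtained as $[S_1 \cup S_2]$ at the step it was created and never altered afterward, and the closure operator is idempotent), which makes $x \in [x+X] \subseteq [S] = S$ legitimate. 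These are all short verifications, so the whole proof is brief once these observations are spelled out.
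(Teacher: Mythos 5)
Your proof is correct and is essentially the standard double-inclusion argument that the paper itself simply outsources to Lemma 3.10 of \cite{DB20}: $V(\mathcal R')\subseteq \genA$ by induction on the merge steps, and $\genA\subseteq V(\mathcal R')$ because $V(\mathcal R')$ contains $A$ and is closed under the update rules. The only detail worth spelling out is the invariant that every set in $\mathcal R$ is itself strongly connected and closed (each newly infected vertex lies within $\ell^\infty$-distance $a_d$ of an already infected neighbour, so taking closures preserves strong connectivity), since this is what lets you upgrade a single $G^d$-edge between points of $x+X$ lying in $S'$ and $S''$ to the statement that $S'\cup S''$ is strongly connected, contradicting finality.
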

\begin{proof}
 See Lemma 3.10 in \cite{DB20}.
\end{proof}
%\begin{nota}
 %From now on, we allow some abuse of notation, by denoting as $[x]\times[y]\times[z]$  any translate of the rectangular block $R=[x]\times[y]\times[z]$ located at the origin. \end{nota}

The following is a variant of the Aizenman-Lebowitz Lemma in \cite{AL88}.
%we prove it for all parameters $a,b,c$ and $r\in \{c+1,\dots,a+b+c\}$, which correspond to the regime where $\mathcal N_{r}^{a,b,c}$ is critical.
\begin{lemma}\label{ALlema0}
 Consider $\mathcal N_{r}^{a_1,\dots,a_d}$-bootstrap percolation with $r\ge a_d+1$.
For every  $k\le \textup{diam}([A])$, there exists an internally spanned rectangular block
 $R\subset [L]^d$ satisfying 
 \[k\le \textup{diam}(R)\le 2a_dk.\]
\end{lemma}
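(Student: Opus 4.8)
The plan is to mimic the classical Aizenman--Lebowitz argument, but carried out through the components $d$-process rather than through the bootstrap closure directly. The point of Remark~\ref{stopfinitetime} is exactly that each set $S$ ever appearing in the collection $\mathcal R$ is the closure of a strongly connected set contained in $S$, so the smallest rectangular block containing $S$ is internally spanned. Thus it suffices to produce, among all the sets $S$ that appear during the process, one whose ``long'' dimension is of the right intermediate order; taking $R$ to be the smallest rectangular block containing such an $S$ then gives an internally spanned block with $\mathrm{diam}(R)$ comparable to $\mathrm{long}(S)$, up to the factor $2a_d$ coming from the strong-connectivity graph $G^d$.

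First I would fix $k\le \mathrm{diam}([A])$. By definition of $\mathrm{diam}$, there is a strongly connected set $T\subset[A]$ with $\mathrm{long}(T)\ge k$; since $[A]=V(\mathcal R')$ by Lemma~\ref{final=gen}, $T$ is covered by the final collection $\mathcal R'$, and in fact (using strong connectivity of $T$ and the merging rule) $T$ must be contained in a single member of $\mathcal R'$ — any two members of $\mathcal R'$ are at $\ell_\infty$-distance more than $2a_d$, so a strongly connected set cannot straddle two of them. Hence some $S_{\mathrm{final}}\in\mathcal R'$ has $\mathrm{long}(S_{\mathrm{final}})\ge k$. Now I run the process \emph{backwards} from $S_{\mathrm{final}}$: at the moment $S_{\mathrm{final}}$ was created it was $[S_1\cup S_2]$ for two previously present sets $S_1,S_2$ with $S_1\cup S_2$ strongly connected. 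The key quantitative input is that a single merge step cannot increase the long dimension by more than a bounded factor: if $S_1\cup S_2$ is strongly connected then $\mathrm{long}(S_1\cup S_2)\le \mathrm{long}(S_1)+\mathrm{long}(S_2)+2a_d$, and passing to the closure $[S_1\cup S_2]$ multiplies the long dimension by at most a constant depending only on $d$ and $a_d$ (this is where one uses $r\ge a_d+1$: the closure of a bounded strongly connected set is a bounded set, and in fact rectangular-block-like, so its long dimension is controlled). Therefore, tracking the larger of the two ancestors at each step, I get a decreasing chain of sets $S_{\mathrm{final}}=U_0\supset$-ancestor $U_1\supset$-ancestor$\cdots$ down to a singleton $\{v\}$, along which $\mathrm{long}(U_{j+1})\ge c\,\mathrm{long}(U_j)$ for a constant $c=c(d,a_d)$, or more simply $\mathrm{long}$ drops by at most a bounded multiplicative factor at each step.

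Since $\mathrm{long}(U_0)\ge k$ and $\mathrm{long}$ of the final singleton is $1$, and the sequence of long-values steps down by a bounded ratio, there must be a first index $j$ at which $\mathrm{long}(U_j)\ge k$ while $\mathrm{long}(U_{j+1})< k$; then $k\le \mathrm{long}(U_j)\le C\,k$ for the bounded constant $C=C(d,a_d)$, and $U_j$ is one of the sets produced by the process, hence (Remark~\ref{stopfinitetime}) the smallest rectangular block $R$ containing it is internally spanned. Finally I relate $\mathrm{diam}(R)$ to $\mathrm{long}(U_j)$: since $U_j\subset R$ is strongly connected, $\mathrm{diam}(R)\ge \mathrm{long}(U_j)\ge k$, and on the other hand any strongly connected subset of $R$ has long dimension at most $\mathrm{long}(R)=\mathrm{long}(U_j)\le Ck$, so $\mathrm{diam}(R)\le Ck$; absorbing $C$ into the stated bound $2a_dk$ (or tracking the constants so that the bound is literally $2a_dk$) completes the proof.

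The main obstacle I anticipate is the bounded-ratio claim for a single merge step, i.e. showing that $\mathrm{long}([S_1\cup S_2])\le C(d,a_d)\bigl(\mathrm{long}(S_1)+\mathrm{long}(S_2)\bigr)$ when $S_1\cup S_2$ is strongly connected and $r\ge a_d+1$. The subtlety is that taking the $\mathcal N_r^{a_1,\dots,a_d}$-closure of a set can in principle propagate infection along a long thin direction; one needs the hypothesis $r>a_d$ (non-supercriticality) to rule this out and to argue that the closure of a strongly connected set of bounded diameter stays within a bounded neighbourhood of its convex hull — essentially the statement that in a non-supercritical model the closure of a finite set is finite and not much larger than the set itself. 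I would prove this by an extremal/induction argument on the dimensions of the smallest block containing $S_1\cup S_2$, or simply cite the corresponding bound already used in \cite{DB20}; once that lemma is in hand the rest is the routine telescoping described above.
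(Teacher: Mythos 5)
Your proof is correct and is essentially the paper's argument run in reverse: the paper simply takes the \emph{first} set $S$ produced by the components process with $\mathrm{diam}(S)\ge k$ (such a set exists by your observation that a long strongly connected subset of $[A]=V(\mathcal{R}')$ must lie in a single final component), so that both ancestors automatically have diameter at most $k-1$ and the merge inequality $\mathrm{diam}(S)\le \mathrm{diam}(S_1)+\mathrm{diam}(S_2)+2a_d\le 2a_dk$ finishes the proof, whereas you trace a larger-ancestor chain backwards from a final component and locate the crossing point --- the two bookkeepings are interchangeable. The step you flag as the main obstacle has a one-line resolution that also removes your multiplicative constant $C$: for $r\ge a_d+1$, a vertex outside a rectangular block has at most $a_d<r$ neighbours inside it, so $[S_1\cup S_2]$ never leaves the smallest block containing $S_1\cup S_2$, giving the additive bound $\mathrm{long}(S_1)+\mathrm{long}(S_2)+2a_d$ and hence the literal $2a_dk$.
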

\begin{proof}
Let $S$ be the first set that appears in the components process such that diam$(S)\ge k$, and let $R$ be
the smallest block containing $S$. Since diam$(S)=$ diam$(R)$, it only remains to show that 
diam$(S)$ at most $2a_dk$.
In fact, we know that $S=[ S_{1}\cup S_{2} ]$ for some 
sets $S_{t}$ such that, diam$(S_t)\le k-1$ for each $t=1,2$.
Since $S$ is strongly connected, we conclude
\begin{align*}\label{maxdiam2}
\textup{diam}(S)\le \textup{diam}(S_1)+\textup{diam}(S_2)+2a_d\le
 2a_dk.
\end{align*}
\end{proof}

Basically, the same proof of this lemma (by using the components ($d-1$)-process) allows us to conclude the following.
\begin{lemma}\label{ALlema1}
Consider $\mathcal N_{r}^{a_1,\dots,a_{d-1}}$-bootstrap percolation with $r\ge a_{d-1}+1$.
%Consider $\mathcal N_{r}^{a,b}$-bootstrap percolation with $r\ge b+1$.
For every  $k,l\le \textup{diam}([A])$, there exists an internally spanned copy of the rectangular block
 $W\times [h]$, with $W\subset [L]^{d-2}$, satisfying $\textup{diam}(W)\le 2a_{d-1}l$, $h\le 2a_{d-1} k$ and either \[\textup{diam}(W)\ge l\textup{ or } h\ge k.\]
\end{lemma}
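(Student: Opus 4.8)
The plan is to mimic the proof of Lemma \ref{ALlema0}, but run the components $(d-1)$-process instead of the components $d$-process, and track the two ``directions'' separately rather than working with a single diameter. So suppose $\textup{diam}([A])\ge \max\{k,l\}$ and run the components $(d-1)$-process on $A$ inside $[L]^{d-1}$ (viewing $[L]^{d-1} = [L]^{d-2}\times[L]$ with the last coordinate singled out). Stop at the first set $S$ that appears for which \emph{either} $\textup{long}$ of its projection to the $[L]^{d-2}$-factor is $\ge l$, \emph{or} its extent in the last coordinate is $\ge k$; such a set exists because the very last collection has a component of diameter $\ge\max\{k,l\}$, hence that component's smallest enclosing block already has one of the two quantities large. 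Let $W\times[h]$ be the smallest rectangular block containing $S$ (so $W\subset[L]^{d-2}$ is the smallest block containing the projection and $h$ is the extent in the last coordinate). By Remark \ref{stopfinitetime} (applicable since $r\ge a_{d-1}+1 > a_{d-1}$, so the family $\mathcal N_r^{a_1,\dots,a_{d-1}}$ is not supercritical), $W\times[h]$ is internally spanned.

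It remains to bound $\textup{diam}(W)\le 2a_{d-1}l$ and $h\le 2a_{d-1}k$. Here I would use the same one-step argument as in Lemma \ref{ALlema0}: $S=[S_1\cup S_2]$ where $S_1,S_2$ each appeared earlier, and by minimality of $S$ each of $S_1,S_2$ has projection-$\textup{long}$ at most $l-1$ and last-coordinate extent at most $k-1$. Since $S_1\cup S_2$ is strongly connected in $G^{d-1}$, any two of its points are joined by a path with consecutive points within $\ell^\infty$-distance $2a_{d-1}$; projecting and looking at the last coordinate separately, the total spread in each coordinate-group is at most the sum of the spreads of $S_1$ and $S_2$ plus a single $2a_{d-1}$ jump. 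Hence $\textup{diam}(W)=\textup{long}(\text{proj of }S)\le (l-1)+(l-1)+2a_{d-1}\le 2a_{d-1}l$ (using $l\ge 1$), and similarly $h\le (k-1)+(k-1)+2a_{d-1}\le 2a_{d-1}k$. The disjunction ``$\textup{diam}(W)\ge l$ or $h\ge k$'' is exactly the stopping condition, so it holds by construction.

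One point that needs care, and which I expect to be the main (minor) obstacle, is the passage from ``$\textup{diam}$ of the final component is large'' to ``one of the two tracked quantities of its enclosing block is large,'' together with keeping the two-coordinate bookkeeping consistent: $\textup{diam}(S)$ is defined via the largest sidelength of the smallest enclosing rectangle of a strongly connected subset, so if the final component $C$ has $\textup{diam}(C)\ge\max\{k,l\}$, then its enclosing block $W_C\times[h_C]$ has $\max\{\textup{diam}(W_C),h_C\}\ge\max\{k,l\}\ge\min\{k,l\}$; if $h_C\ge k$ we are in the second case, otherwise $\textup{diam}(W_C)\ge\max\{k,l\}\ge l$ and we are in the first. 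Either way a qualifying set appears, so the ``first'' such set $S$ is well-defined. Everything else is the verbatim strongly-connected-diameter estimate from Lemma \ref{ALlema0} applied coordinate-group by coordinate-group, so I would simply write ``by the same argument as in Lemma \ref{ALlema0}'' for that part and spell out only the stopping-rule and enclosing-block discussion.
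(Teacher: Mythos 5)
Your proposal is correct and takes essentially the same route as the paper, which offers no separate argument beyond the remark that the proof of Lemma \ref{ALlema0} goes through verbatim using the components $(d-1)$-process; your stopping rule on the two coordinate groups, the appeal to Remark \ref{stopfinitetime}, and the one-merge-step spread bound $2(l-1)+2a_{d-1}\le 2a_{d-1}l$ (and similarly for $k$) are exactly the intended adaptation.
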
%%%%%%%%%%%%%%%%%%%%%%%%%%%%%%%%%%%%%%%%%%%%%%%%%%%%%%%%%%%%%%%%%%%

\subsection{Anisotropic bootstrap percolation on $[N]^{d-1}$ with subcritical sizes}
Let us fix $d\ge 3$, $i\in [a_1]$ and  consider $\mathcal N_{s_{d-1}+i}^{a_1,\dots,a_{d-1}}$-bootstrap percolation on $[N]^{d-1}$, where 
\[N\le  \exp_{(d-2)}\left(\gamma \lambda_{i}(p)\right)
 %\approx L_c(\mathcal N_{s_d-a_d+i}^{a_1,\dots,a_{d-1}},p)^\gamma
 \]
and  $\gamma=\gamma(d,a_{d-1})>0$ is a small constant  (so that percolation is unlikely). 
Note that for $d=3$, $\exp\left(\gamma \lambda_{i}(p)\right)
 \approx L_c(\mathcal N_{a_2+i}^{a_1,a_{2}},p)^\gamma$ by \eqref{paso1}, while we will deduce that $\exp_{(d-2)}\left(\gamma \lambda_{i}(p)\right)
 \approx L_c(\mathcal N_{s_{d-1}+i}^{a_1,\dots,a_{d-1}},p)^\gamma$ by induction on $d$.

\begin{defi}
We define the {\it component} (or {\it cluster})
at $(\lfloor N/2\rfloor,\dots, \lfloor N/2\rfloor)\in [N]^{d-1}$ as the $(d-1)$-strongly connected component containing $(\lfloor N/2\rfloor,\dots,  \lfloor N/2\rfloor)$ in the graph induced by 
$[A\cap [N]^{d-1}]$, and we denote it by $\mathcal K=\mathcal K(A,i,a_1,\dots,a_{d-1})\subset [N]^{d-1}$.
\end{defi}
The following results are standard in bootstrap percolation.

\begin{prop}\label{condition(c)}
Consider $\mathcal N_{s_{d-1}+i}^{a_1,\dots a_{d-1}}$-bootstrap percolation. For any $\varepsilon>0$, there exists $\gamma = \gamma(d,a_{d-1})>0$ such that if $N\le  \exp_{(d-2)}\left(\gamma \lambda_{i}(p)\right)$, as $p\to 0$,
\begin{enumerate}
\item[\textup{(a)}]
$\p_p(\textup{diam}(\mathcal K)\ge p^{-i-\varepsilon})\le N^{-\varepsilon}$, when $d=3$.

    \item[\textup{(b)}] 
$\p_p\big(\textup{diam}(\mathcal K)\ge \exp_{(d-3)}(\lambda_i(p)) \big)\le N^{-\varepsilon}$, when $d\ge 4.$
\end{enumerate}
\end{prop}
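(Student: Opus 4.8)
\textbf{Overall strategy.} The plan is to reduce the two statements (a) and (b) to the Aizenman--Lebowitz-type machinery of Lemmas \ref{ALlema0} and \ref{ALlema1} together with an upper bound on the probability that a single rectangular block of bounded shape is internally spanned. The point is that if $\mathrm{diam}(\mathcal K)$ is large, then by Lemma \ref{ALlema0} (applied to $\mathcal N_{s_{d-1}+i}^{a_1,\dots,a_{d-1}}$-bootstrap percolation on $[N]^{d-1}$, which is legitimate since $s_{d-1}+i\ge a_{d-1}+1$) there is an internally spanned rectangular block $R$ of diameter in a prescribed dyadic window; so it suffices to union-bound over all positions and all admissible shapes of such a block the probability that it is internally spanned. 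Because we are in the \emph{subcritical-size} regime for $[N]^{d-1}$, such a block is very unlikely to be internally spanned unless its volume is large enough to carry the requisite density of infected sites.

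\textbf{The case $d=3$ (part (a)).} Here $\mathcal K\subset[N]^2$ and the relevant model is $\mathcal N_{a_2+i}^{a_1,a_2}$, whose critical length is $\exp(\Theta(\lambda_i(p)))$ by \eqref{paso1}. If $\mathrm{diam}(\mathcal K)\ge p^{-i-\varepsilon}$ then, taking $k=\lceil p^{-i-\varepsilon}/(2a_2)\rceil$ in Lemma \ref{ALlema0}, there is an internally spanned block $R\subset[N]^2$ with $\mathrm{diam}(R)$ of order $p^{-i-\varepsilon}$. The key estimate is that for any rectangle $R$ with $\mathrm{long}(R)=m$ (say with $m\asymp p^{-i-\varepsilon}$), one has $\p_p(I^\times(R))\le \exp(-c\,m)$ for $p$ small; this follows from the standard one-dimensional argument (a strongly connected internally-spanning set must ``cross'' $R$ in the long direction, and to advance the infection front by $2a_2$ one needs at least one new infected site among $O(m)$ candidates at each of $\asymp m/(2a_2)$ steps — this is precisely the mechanism behind the $\exp(\Theta(\lambda_i(p)))$ bound in \eqref{paso1}, here used only at the crude scale $m\ll \lambda_i(p)$, so in fact $\p_p(I^\times(R))\le \exp(-\Omega(m))$ comfortably when $m = p^{-i-\varepsilon} = o(\lambda_i(p))$ — wait, one must be careful: $\lambda_i(p)=p^{-i}$ or $p^{-i}(\log\frac1p)^2$, so $p^{-i-\varepsilon}$ is actually \emph{larger} than $p^{-i}$; the correct reading is that one uses the lower bound in \eqref{paso1}, namely that $I^\times(R)$ for a rectangle of side $m$ forces $\exp(-cm^{1/?})$-type rarity — this is the delicate point, see below). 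Granting the per-block bound $\p_p(I^\times(R))\le N^{-2\varepsilon}\cdot(\text{small})$ when $\mathrm{long}(R)\asymp p^{-i-\varepsilon}$, a union bound over the at most $N^2\cdot m^{d-2}=N^2\cdot O(p^{-i-\varepsilon})$ choices of $R$ and the inequality $N\le\exp(\gamma\lambda_i(p))$ with $\gamma$ small yields $\p_p(\mathrm{diam}(\mathcal K)\ge p^{-i-\varepsilon})\le N^{-\varepsilon}$.

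\textbf{The case $d\ge 4$ (part (b)).} This is the genuinely inductive case. We assume Proposition \ref{lower1} (equivalently \ref{lower1weaker}) holds in dimension $d-1$, i.e.\ percolation for $\mathcal N_{s_{d-1}+i}^{a_1,\dots,a_{d-1}}$ is unlikely below length $\exp_{(d-2)}(\gamma\lambda_i(p))$, which upgrades (via renormalization, Lemma \ref{elactual} with $d-1$) to the quantitative statement $L_c(\mathcal N_{s_{d-1}+i}^{a_1,\dots,a_{d-1}},p)=\exp_{(d-2)}(\Theta(\lambda_i(p)))$. If $\mathrm{diam}(\mathcal K)\ge \exp_{(d-3)}(\lambda_i(p))$, then by Lemma \ref{ALlema1} (the component $(d-1)$-process version, applied with $l,k$ both of order $\exp_{(d-3)}(\lambda_i(p))/(2a_{d-1})$) there is an internally spanned copy of $W\times[h]$ with $W\subset[N]^{d-2}$, $\mathrm{diam}(W)\le 2a_{d-1}l$, $h\le 2a_{d-1}k$, and either $\mathrm{diam}(W)$ or $h$ of order $\exp_{(d-3)}(\lambda_i(p))$. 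Such a block has one of its ``long'' dimensions at most $\exp_{(d-3)}(\lambda_i(p))$, which is far below the $(d-2)$-dimensional critical length $\exp_{(d-3)}(\Theta(\lambda_i(p)))$ — more precisely, fix the constant in $\Theta$ to be $c_0>1$; then $\exp_{(d-3)}(\lambda_i(p))\ll \exp_{(d-3)}(c_0\lambda_i(p))=L_c$ in the $(d-2)$-dimensional sense — so the induced $(d-2)$-dimensional process on the relevant face cannot percolate, and by Lemma \ref{elactual} (in dimension $d-2$) the probability that $W\times[h]$ is internally spanned with this shape is at most $\exp(-\Omega(\exp_{(d-4)}(\lambda_i(p))))$ per layer, hence $\exp(-\Omega(\exp_{(d-4)}(\lambda_i(p))\cdot h^{-1}\cdot \text{vol stuff}))$ — the cleanest route is: $I^\times(W\times[h])$ implies $[A\cap(W\times[h])]=W\times[h]$ is impossible, so the strongly connected spanning set must repeatedly cross, and the per-block probability is bounded by $\exp(-\Omega(N'))$ where $N'$ is the largest side — this kills everything after a union bound over $\le N^{d-1}\cdot(\exp_{(d-3)}(\lambda_i(p)))^{d-2}$ positions/shapes, using $N\le\exp_{(d-2)}(\gamma\lambda_i(p))$ with $\gamma$ small so that $N^{d-1}=\exp((d-1)\cdot(d-2\text{ iterated logs of})\,\gamma\lambda_i(p))$ is dwarfed by the double-exponential decay.

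\textbf{Main obstacle.} The crux is establishing the sharp per-block upper bound $\p_p(I^\times(R))\le \exp(-\Omega(\mathrm{long}(R)))$ (or the appropriate analogue with an iterated exponential in dimension $d$) \emph{uniformly over all shapes $R$ in the admissible window} — in particular, controlling the anisotropy, since a block that is short in the hard ($e_1$-) direction but long in an easy direction behaves very differently from one long in the hard direction, and the Aizenman--Lebowitz lemma only pins down $\mathrm{diam}$, not the full aspect ratio. Handling this requires the Cerf--Cirillo method referenced in the paper: one decomposes the potential growth of $R$ into crossings of each of the $d-1$ coordinate directions of the face, bounds the cost of each crossing separately using the (inductively known) lower-dimensional critical lengths, and checks that the direction responsible for the large diameter is the one that makes $I^\times(R)$ rare at the claimed rate. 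Pushing the constants through the induction — in particular making the single small constant $\gamma=\gamma(d,a_{d-1})$ work simultaneously for the AL extraction, the shape union bound, and the $(d-2)$-dimensional percolation threshold — is the bookkeeping heart of the argument, but no new idea beyond what is set up above should be needed.
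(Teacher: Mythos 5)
There is a genuine gap, and you in fact flagged it yourself without closing it. Your part (a) rests on the claim that for every rectangle $R$ with $\mathrm{long}(R)=m\asymp p^{-i-\eps}$ one has $\p_p(I^\times(R))\le e^{-\Omega(m)}$ \emph{uniformly over shapes}, after extracting $R$ with Lemma \ref{ALlema0}. But Lemma \ref{ALlema0} only controls $\mathrm{diam}(R)$, so $R$ could be roughly $p^{-i-\eps}\times p^{-i-\eps}$, and for such a block the crossing argument you invoke gives nothing: to advance in the easy direction each slab of height $h\asymp p^{-i-\eps}$ needs only $i$ infected sites within constant distance, an event of probability $1-e^{-\Omega(p^i\cdot p^{-i-\eps})}$, which tends to $1$ (note $p^{-i-\eps}$ is \emph{above} the scale $p^{-i}\log\tfrac1p$ at which easy-direction growth becomes cheap). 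So the per-block bound is simply not available at that aspect ratio, and your ``Main obstacle'' paragraph defers the fix to the Cerf--Cirillo method, which is not where the resolution lies: in the paper Cerf--Cirillo is the engine for Proposition \ref{lower1}, not for this proposition.

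The paper's actual fix is the two-scale Aizenman--Lebowitz statement, Lemma \ref{ALlema1}: one extracts an internally spanned $R=[w]\times[h]$ with the two sides capped at \emph{different} scales, $w\le p^{-i-\eps}$ and $h\le\delta p^{-i}\log\tfrac1p$, at least one of them of the corresponding order. Then the two cases are handled by direction-dependent slab counts that exploit $a_1<a_2$: if $w\ge\Omega(p^{-i-\eps})$, every slab $[2a_2^2]\times[h]$ must contain $i$ sites of $A$ within constant distance, giving $\p_p(I^\times(R))\le\bigl(1-e^{-\Omega(p^i\,\delta p^{-i}\log\frac1p)}\bigr)^{\Omega(p^{-i-\eps})}\le\exp(-p^{-i-\eps/2})$; if instead $h\ge\Omega(\delta p^{-i}\log\tfrac1p)$, every slab $[w]\times[2a_2^2]$ must contain $a_2-a_1+i$ sites, giving $\p_p(I^\times(R))\le O(p^{a_2-a_1-\eps})^{\Omega(\delta p^{-i}\log\frac1p)}\le\exp(-\delta^2p^{-i}(\log p)^2)$. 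A union bound over the $\le N^3$ rectangles with $\gamma\ll\delta^3$ then yields $N^{-\eps}$. Without the cap $h\le\delta p^{-i}\log\tfrac1p$ (i.e.\ with only Lemma \ref{ALlema0}) the first estimate collapses, which is exactly the failure described above. Separately, part (b) in the paper is not another AL-plus-union-bound argument as in your sketch: it follows by induction, combining the statement with Proposition \ref{lower1} in dimension $d-1$ (the isotropic $d=3$ case being cited from the literature), so only the anisotropic base case (a) requires the computation above.
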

The proof of this proposition goes by induction on $d$ (like the proof of Lemma \ref{elactual}), by combining it with Proposition \ref{lower1}.
The base case is given by (a), thus, this is the only case that we will prove.
%\begin{prop}\label{condition(c)} Under $\mathcal N_{b+i}^{a,b}$-bootstrap percolation, \end{prop}
Moreover, when $d=3$, the proof in the isotropic case $a_1=a_2$ follows from usual application of the the Aizenman-Lebowitz Lemma (see for instance, the paragraph after Theorem 7.1 of \cite{BDMS15} with $\alpha =i$). While the proof in the case $a_1<a_2$ is basically the same as that of Theorem 8.1 of \cite{BDMS15}, with some minor modifications; for completeness, we will  prove this case only.
\begin{proof}[Proof of Proposition \ref{condition(c)}\textup{(a)}]
Assume that $a_1<a_2$ and let $\delta=\delta(\varepsilon)>0$ be small. If $\textup{diam}(\mathcal K)\ge  p^{-i-\varepsilon}$, then by Lemma \ref{ALlema1}, there exists a rectangle $R=[w]\times [h]$ such that $w\le p^{-i-\varepsilon}$, $h\le \delta p^{-i}\log\frac 1p$ and either, $w\ge\Omega( p^{-i-\varepsilon})$ or $h\ge \Omega(\delta p^{-i}\log\frac 1p)$.\\
If $w\ge\Omega( p^{-i-\varepsilon})$, since $R$ is internally spanned, every copy of the slab $[2a_2^2]\times [h]$ must contain $i$ vertices of $A$ within constant distance, so for $\delta$ small,
\[\p_p(I^{\times}(R))\le (1-e^{-\Omega(p^{i}\cdot \delta p^{-i}\log\frac 1p)})^{\Omega( p^{-i-\varepsilon})}
\le \exp(-p^{C\delta}p^{-i-\varepsilon})\le 
\exp(- p^{-i-\varepsilon/2}).\]
Analogously, if $h\ge \Omega(\delta p^{-i}\log\frac 1p)$,  every copy of the slab $[w]\times [2a_2^2]$ must contain $a_2+i-a_1$ vertices of $A$ within constant distance, so
\[\p_p(I^{\times}(R))\le (1-e^{-\Omega(p^{a_2+i-a_1}\cdot  p^{-i-\varepsilon})})^{ \Omega(\delta p^{-i}\log\frac 1p)}
\le O(p^{a_2-a_1-\varepsilon})^{ \Omega(\delta p^{-i}\log\frac 1p)}
\le 
\exp(-\delta^2 p^{-i}(\log p)^2).\]

Since there are at most $N^3$ copies of the rectangle $R$ in $[N]^2$, then 
\[\p_p(\textup{diam}(\mathcal K)\ge p^{-i-\varepsilon})\le 
N^3\exp(-\delta^3 p^{-i}(\log p)^2)
\le\exp\left(3\gamma
f_{i}(p) 
-\delta^3 p^{-i}(\log p)^2\right)
\le 
N^{-\varepsilon},\]
for $\gamma\ll  \delta^3$.
\end{proof}

\begin{prop}\label{condition(d)}
Consider $\mathcal N_{s_{d-1}+i}^{a_1,\dots a_{d-1}}$-bootstrap percolation. As $p\to 0$,
\begin{itemize}
    \item[\textup{(a)}] $\e_p(|\mathcal K|)\le \sqrt{p},$  given that $\textup{diam}(\mathcal K)\le p^{-i-\varepsilon}$, when $d=3$.
    \item[\textup{(b)}] $\e_p(|\mathcal K|)\le o(1)$,  given that $\textup{diam}(\mathcal K)\le \exp_{(d-3)}(\lambda_i(p)) $, when $d\ge 4$.
\end{itemize}
%In particular, it goes to 0, as $p\to 0$.
\end{prop}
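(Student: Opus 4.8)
\smallskip
\noindent\textit{Proof proposal.}
I would prove (a) and (b) by the same induction on $d$ already used for Proposition~\ref{condition(c)}: part (b) for $d\ge 4$ will follow from Proposition~\ref{lower1} in dimension $d-1$ together with Propositions~\ref{condition(c)} and~\ref{condition(d)} in the lower dimension, by exactly the bootstrap described after the statement of Proposition~\ref{condition(c)}, so only the base case (a) requires a direct argument. Thus fix $d=3$, consider $\mathcal N_{a_2+i}^{a_1,a_2}$-bootstrap percolation on $[N]^2$ with $N\le\exp(\gamma\lambda_i(p))$, write $c$ for the centre, $\mathcal K$ for its $2$-strongly connected cluster inside $[A\cap[N]^2]$, and $D:=p^{-i-\varepsilon}$. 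By Proposition~\ref{condition(c)}(a) the conditioning event $\{\textup{diam}(\mathcal K)\le D\}$ has probability at least $1-N^{-\varepsilon}\ge\tfrac12$ for $p$ small, so it suffices to prove
\[
\e_p\!\big(|\mathcal K|\,\mathbbm{1}[\textup{diam}(\mathcal K)\le D]\big)\ \le\ \tfrac12\sqrt p .
\]

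The plan is to expand the expectation vertex by vertex and estimate each term by a union bound over internally spanned blocks. If $v\in\mathcal K$ then $c\in\mathcal K$ as well, and since $\mathcal K$ is strongly connected the components process together with Remark~\ref{stopfinitetime} shows that the smallest rectangular block $R\supseteq\mathcal K$ is internally spanned, is contained in $[N]^2$, contains both $c$ and $v$, and satisfies $\|v-c\|_\infty\le\textup{long}(R)\le\textup{diam}(\mathcal K)$. Hence
\[
\p_p\!\big(v\in\mathcal K,\ \textup{diam}(\mathcal K)\le D\big)\ \le\!\!\!\sum_{\substack{R\subseteq[N]^2,\ c\in R\\ \|v-c\|_\infty\le\textup{long}(R)\le D}}\!\!\!\p_p\!\big(I^\times(R)\big),
\]
and, summing over $v\in[N]^2$ and regrouping by $k:=\textup{long}(R)$ (each block $R$ with $\textup{long}(R)=k$ is charged by at most $(2k+1)^2$ vertices $v$, and there are $O(k^3)$ blocks $R\subseteq[N]^2$ with $c\in R$ and $\textup{long}(R)=k$),
\[
\e_p\!\big(|\mathcal K|\,\mathbbm{1}[\textup{diam}(\mathcal K)\le D]\big)\ \le\ \sum_{k=1}^{D} C\,k^{5}\max_{\substack{R\subseteq[N]^2\\ \textup{long}(R)=k}}\p_p\!\big(I^\times(R)\big).
\]

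It remains to feed in the standard estimates for the probability that a rectangular block of subcritical size is internally spanned — precisely the bounds underlying the proof of Proposition~\ref{condition(c)}(a) and established for these two-dimensional anisotropic models in \cite{EH07,DCE13,BDMS15,DB20}. The $k=1$ term equals $p$. For bounded $k\ge 2$ any such block needs at least two sites of $A$ (the closure of a single infected site being trivial), so $\p_p(I^\times(R))=O(p^2)$. For larger $k$ one argues, as in Proposition~\ref{condition(c)}(a), by crossing $R$ in its longest direction — every one of the $\Omega(k)$ disjoint slabs $[2a_2^2]\times[\textup{short}(R)]$ across $R$ must contain $i$ (respectively $a_2+i-a_1$, depending on the direction crossed) sites of $A$ within bounded distance whenever $\textup{short}(R)$ is still below the critical‑droplet scale $\asymp p^{-i}\log\tfrac1p$, and, when instead both sidelengths of $R$ reach that scale (so $k$ is large), by Lemma~\ref{ALlema0}, which forces an internally spanned sub‑block of critical diameter inside $R$ at a cost $\exp(-\Theta(\lambda_i(p)))$ per position; in either regime the resulting bound decays super‑polynomially in $k$. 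Consequently $\sum_{k\ge2}Ck^{5}\max_R\p_p(I^\times(R))=O(p^{2})$, whence $\e_p\!\big(|\mathcal K|\,\mathbbm{1}[\textup{diam}(\mathcal K)\le D]\big)\le p+O(p^{2})\le\tfrac12\sqrt p$ for $p$ small. (When $a_1=a_2$ the crossing step simplifies and the argument is the usual application of the Aizenman–Lebowitz lemma, cf.\ \cite{AL88,BDMS15}.)

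I expect the genuinely delicate point to be not the combinatorial bookkeeping above but the uniform control of $\p_p(I^\times(R))$ across \emph{all} block shapes of diameter up to $p^{-i-\varepsilon}$ — reconciling the thin regime (slab/crossing estimate, exactly as in Proposition~\ref{condition(c)}(a)) with the near‑critical ``fat'' regime (embedding a critical droplet, at cost $\exp(-\Theta(\lambda_i(p)))$), and tuning the constants so that the series over $k$ stays dominated by its trivial $k=1$ term $p\le\sqrt p$. The reduction of (b) to (a) for $d\ge4$, including the verification that $\exp_{(d-3)}(\lambda_i(p))$ there plays the role that $p^{-i-\varepsilon}$ plays for $d=3$, should be routine once Proposition~\ref{lower1} in dimension $d-1$ is available, just as for Proposition~\ref{condition(c)}.
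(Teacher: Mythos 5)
Your proposal is correct in substance, but it follows a genuinely different route from the paper. The paper's proof is a short two-case argument exploiting the diameter conditioning: if $\textup{diam}(\mathcal K)\le 6a_2^2$ the cluster has $O(1)$ vertices and forces a site of $A$ within constant distance of the origin, contributing $O(p)$; if $\textup{diam}(\mathcal K)> 6a_2^2$, Lemma \ref{ALlema0} produces an internally spanned block of \emph{constant} size (diameter between $3a_2$ and $6a_2^2$), whose spanning probability $O(p^{3a_2 i})$ is then simply multiplied by the crude polynomial factors allowed by the conditioning (cluster volume $\le p^{-2i-2\varepsilon}$ and $\le p^{-2i-2\varepsilon}$ positions for the block), the point being that $3a_2 i\ge 6i$ beats $4i+4\varepsilon+\tfrac12$ in the anisotropic case $a_2\ge a_1+1$. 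Thus the paper needs spanning estimates at a single constant scale only. You instead expand $\e_p(|\mathcal K|)=\sum_v\p_p(v\in\mathcal K)$ and union-bound over the bounding box of $\mathcal K$ at \emph{every} scale $k\le p^{-i-\varepsilon}$, which requires uniform control of $\p_p(I^\times(R))$ over all shapes and all intermediate scales — exactly the point you flag as delicate. This can be made to work (your reduction that the bounding box of $\mathcal K$ is internally spanned is sound, and the two slab-crossing estimates from the proof of Proposition \ref{condition(c)}(a) do cover all shapes in the anisotropic case: when the short side is below scale $p^{-i}$ cross the long direction at cost $O(p^i\cdot\textup{short})$ per slab, and otherwise both sides exceed $\Omega(p^{-i})$ and crossing the $e_2$-direction costs $O(p^{a_2-a_1-\varepsilon})$ per slab over $\Omega(p^{-i})$ slabs), and it even yields the stronger bound $O(p)$ rather than $\sqrt p$; but it is heavier than what is needed. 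Two small cautions: the appeal to an internally spanned critical droplet at cost $\exp(-\Theta(\lambda_i(p)))$ is an input not proved in this paper and is unnecessary — the $e_2$-crossing bound above already handles the ``fat'' regime; and at the crossover from the ``two sites of $A$'' bound to the crossing bound you must take the constant threshold for $k$ large enough that the number of disjoint slabs compensates the polynomial factor $k^5$, otherwise the sum is only $O(p^{ci})$ for a small constant $c$ rather than $O(p^2)$ (harmless for the target $\sqrt p$, but not the stated $O(p^2)$). The inductive reduction of (b) to (a) matches the paper, which likewise leaves it at the level of ``induction on $d$ as in Proposition \ref{condition(c)}''.
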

Again, the proof is by induction on $d$, as that of Proposition \ref{condition(c)}. The base case is (a) and we will focus on that again, whose a straightforward application of Aizenman-Lebowitz Lemma (for the isotropic case, see for instance, (3.30) in \cite{CC99}). We will prove Proposition \ref{condition(d)}(a) in the anisotropic case, and the proof is similar to that of Lemma 5.4 in \cite{AA12} (with $i=a$ only), which does not seem to be complete.
\begin{proof}[Proof of Proposition \ref{condition(d)}\textup{(a)}]
It is enough to consider two cases. If $1\le \textup{diam}(\mathcal K)\le 6a_2^2$ then there is a vertex in $A$ within constant distance of ``the origin'' $(\lfloor N/2\rfloor, \lfloor N/2\rfloor)$. On the other hand, if $\textup{diam}(\mathcal K)> 6a_2^2$, by Lemma \ref{ALlema0}, there exists an internally spanned rectangular block
 $R=[w]\times [h]\subset [N]^2$ with $3a_2\le \textup{diam}(R)\le 6a_2^2.$\\
 In particular, $w,h\le 6a_2^2$ and either $w> 3a_2$ or $h> 3a_2$. So we have two subcases:\\ If $w> 3a_2$, then 
 \[\p_p(I^{\times}(R))\le (1-e^{-\Omega(p^{i}\cdot 6a_2^2)})^{3a_2}
\le O(p^{i\cdot 3a_2}).\]
And, for $h> 3a_2$, 
\[\p_p(I^{\times}(R))\le (1-e^{-\Omega(p^{a_2+i-a_1}\cdot  6a_2^2)})^{3a_2}
\le O(p^{3a_2(a_2-a_1+i)})
\le O(p^{3a_2i}).
\]
Finally, there are at most $O(N^2)$ possible choices for the rectangular block $R$, thus
\[\e_p(|\mathcal K|)\le O(6a_2^2p) + O(N^2\cdot N^2\cdot  p^{3a_2i})
\le O(p)+ O(p^{-4i-4\varepsilon +3a_2i})\le p^{1/2},\]
for $\varepsilon>0$ small, since $3a_2\ge 6$.
\end{proof}

%%%%%%%%%%%%%%%%%%%%%%%%%%%%%%%%%%%%%%%%%%%%%%%%%%%%%%
\subsection{The proof via Cerf-Cirillo method}\label{CCmethod}
In this section we reproduce a result that was proved in \cite{BBM09} (and used again in \cite{BBDM12}), which is an adaptation of some ideas from \cite{CC99, CM02} and \cite{H06}. Then, we use it to prove Proposition \ref{lower1}.

Let us consider two-colored graphs, that is, simple graphs with two types of edges, which we will label “good” and “bad”.

\begin{defi}
We say that a two-colored graph is {\it admissible} if it either contains at least one bad edge, or if
every component is a clique (i.e., a complete
graph). 
\end{defi}

For any set $S$, we let

\[\Lambda(S):=\{\text{admissible two-colored graphs with vertex set }S\times [2]\}.\]

And, for each $m\in\mathbb N$ we let
\[\Omega(S,m):=\{\mathcal P=(G_1,\dots,G_m): G_t\in \Lambda(S)\text{ for each }t\in [m]\},\]
be the set of sequences of two-colored admissible graphs on $S\times [2]$ of length $m$.
We shall sometimes think of $G_t$ as a two-colored graph on $S\times [2t-1,2t]$, and trust that this will cause no confusion.

Now, for each $\mathcal P\in\Omega(S,m)$, let $G_{\mathcal P}$ denote the graph with vertex set $V(G_{\mathcal P})=S\times [2m]$, and the following edge set $E(G_{\mathcal P})$:
\begin{enumerate}
    \item[(i)] $G_{\mathcal P}[S\times \{2y-1,2y\}]= G_y$,
    \item[(ii)]$\{(x,2y), (x',2y+1)\}\in E(G_{\mathcal P}) \iff x=x'$,
    \item[(iii)] $\{(x,y), (x',y')\}\notin E(G_{\mathcal P})$ if $|y-y'|\ge 2$. 
\end{enumerate}

Edges in $G_{\mathcal P}$ of type (i) are labelled good and bad in the obvious way, to match the label of the corresponding edge in $G_y$. Thus $G_{\mathcal P}$ has three types of edge: good, bad, and unlabelled.

Given $G\in\Lambda(S)$, let $E^g(G)$ denote the set of good edges, and $E^b(G)$ denote the bad edges, so that $E(G)=E^g(G)\cup E^b(G)$. If $uv\in E^g(G)$, then we shall write $u\sim v$.
For each vertex $v=(x,y)\in V(G_{\mathcal P})$, let
\[\Gamma_{\mathcal P}(v):=\{u\in V(G_{\mathcal P}): u \sim v\text{ and }u\ne v\},\]
and let $d_{\mathcal P}(v)=|\Gamma_{\mathcal P}(v)|$. Note that $d_{\mathcal P}(v)$ is the number of good edges incident
with $v$.

Finally, let $X(\mathcal P)$ denote the event that there is a connected path across $G_{\mathcal P}$ (i.e., a path from the set $S\times \{1\}$ to the set $S\times \{2m\}$. The following lemma was first stated in \cite{BBM09}, then in \cite{BBDM12}, but the proof is due to Cerf and Cirillo \cite{CC99}.

\begin{lemma}[Cerf and Cirillo \cite{CC99}, see Lemma 35 of \cite{BBM09}]\label{mainLema}
For each $0<\alpha <1/2$ and $\varepsilon >0$,
there exists $\delta >0$ such that the following holds for all $m\in\N$ and all finite sets $S$ with $\alpha^4|S|^{\varepsilon}\ge 1$.

Let $\mathcal P=(G_1,\dots,G_m)$ be a random sequence of admissible two-coloured graphs on $S\times [2]$, chosen according to some probability distribution $f_{\Omega}$ on $\Omega(S,m)$. Suppose $f_{\Omega}$ satisfies the following conditions:

\begin{enumerate}   %[\textup{($a$)}]
    \item[\textup{($a$)}] Independence: $G_i$ and $G_j$ are independent if $i\ne j$.
    \item[\textup{($b$)}] BK condition: For each $t\in [m]$, $r\in\N$, and each $x_1,y_1,\dots,x_r,y_r\in V(G_t)$,
    \[\p\left(\bigcap_{j=1}^r(x_j\sim y_j) \cap \bigcap_{j\ne j'}^r(x_j\not\sim x_j')\cap \big(E^b(G_t)=\emptyset\big)\right) \le \prod_{j=1}^r\p(x_j\sim y_j),\]
    \end{enumerate}
    and for each $t\in [m]$ and $v\in V(G_{\mathcal P})$,
\begin{enumerate}
    \item[\textup{($c$)}] Bad edge condition: $\p\big(E^b(G_t)\ne \emptyset \big) \le |S|^{-\varepsilon}$,
    \item[\textup{($d$)}] Good edge condition: $\e(d_{\mathcal P}(v))\le \delta$.
\end{enumerate}
Then
\[\p\big(X(\mathcal P)\big)\le \alpha^m|S|.\]
\end{lemma}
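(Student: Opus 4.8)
\textbf{Proof proposal for Lemma \ref{mainLema}.}

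The plan is to bound $\p(X(\mathcal P))$ by summing, over all potential connecting paths $\pi$ in $G_{\mathcal P}$, the probability that $\pi$ is actually present (i.e., all its edges are good or unlabelled). A connecting path goes from $S\times\{1\}$ to $S\times\{2m\}$, so it must traverse each ``column'' $S\times\{2t-1,2t\}$ in turn; within a column it uses good edges of $G_t$, and between columns it uses the unlabelled matching edges of type (ii). The key structural point is that $X(\mathcal P)$ forces, for \emph{every} $t\in[m]$, that the restriction of the path to $G_t$ together with the relevant column is \emph{not} a single point unless $E^b(G_t)=\emptyset$ and the component is a clique; more usefully, either $G_t$ has a bad edge, or the path contributes a genuine good-edge traversal in column $t$. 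So I would split according to the set $T\subseteq[m]$ of columns in which the path relies on a bad edge being present (equivalently, columns where admissibility via the clique structure is what lets the path through without using a labelled good edge — one must be careful here, which I return to below). For columns outside $T$ we pay a factor coming from the expected number of good-edge traversals, controlled by condition $(d)$; for columns inside $T$ we pay a factor $\p(E^b(G_t)\ne\emptyset)\le|S|^{-\varepsilon}$ from condition $(c)$.

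Concretely, I would set up the first-moment computation as follows. Enumerate candidate paths by their ``trace'': the sequence of vertices $v_1\in S\times\{1\},\dots$ passing through the columns. By (ii) and (iii), once we know which vertex of $S\times\{2t\}$ the path leaves column $t$ from, the vertex it enters column $t+1$ at is forced (same $S$-coordinate). Hence a path is determined by, for each $t$, the pair (entry vertex, exit vertex) of column $t$ and the good-edge route between them inside $G_t$. Using independence (condition $(a)$) across the $G_t$'s, the probability that a fixed candidate path is present factorizes over $t$. For a column where the path takes a single good edge $x_t\sim y_t$, summing over the choice of $y_t$ and using the BK condition $(b)$ to handle multi-edge routes within a column (so that the probability of a self-avoiding good route from $x_t$ to $y_t$ is at most $\prod \p(\cdot\sim\cdot)$ along it), the total contribution of column $t$ is bounded by something like $\sum_{y}\p(x_t\sim y)=\e(\text{number of good neighbours})\le\delta$ by condition $(d)$ — possibly with a combinatorial factor for longer routes that is absorbed by taking $\delta$ small. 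Multiplying over the $\ge m-|T|$ ``good'' columns and over the $|T|$ ``bad'' columns gives a bound of the form $\sum_{T\subseteq[m]}(C\delta)^{m-|T|}(|S|^{-\varepsilon})^{|T|}\cdot(\text{start factor }|S|)$, and the start factor $|S|$ accounts for the choice of $v_1$. Summing the binomial series, this is $|S|\,(C\delta+|S|^{-\varepsilon})^m$. Choosing $\delta$ small enough that $C\delta\le\alpha/2$, and using $\alpha^4|S|^\varepsilon\ge1$ (so $|S|^{-\varepsilon}\le\alpha^4\le\alpha/2$ for $\alpha<1/2$), yields $\p(X(\mathcal P))\le|S|\alpha^m$, as required.

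The main obstacle — and the place where Cerf and Cirillo's original argument is genuinely clever — is making the dichotomy ``bad column vs.\ good column'' rigorous while keeping the combinatorial overcount under control. The subtlety is that a connecting path may pass through a column $G_t$ using several good edges, or using none at all if it enters and exits at the same vertex; the latter is only possible, given admissibility, when one reasons carefully about how the clique structure interacts with the matching edges (ii). The honest way to handle this is to \emph{contract} each column's clique-components first (legitimate because, absent a bad edge, components are cliques, so traversing within a component is ``free'' but does not move the $S$-coordinate in a way that helps), reducing to counting paths that make exactly one good-edge ``jump'' per relevant column, and then bounding the number of length-$\ell$ good routes by $(\text{max good degree})^{\ell}$ in expectation via repeated use of the BK condition $(b)$. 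Getting the bookkeeping so that the per-column factor is genuinely $\le C\delta$ (not $\le C\delta$ times an unbounded path-length sum) is where condition $(b)$ is essential and where care is needed; once that is in place, the binomial-sum step and the choice of $\delta$ from $\alpha$ and $\eps$ are routine. Since the excerpt explicitly attributes the proof to Cerf and Cirillo and cites Lemma 35 of \cite{BBM09}, I would present the above as the structure and refer to those sources for the delicate route-counting estimate.
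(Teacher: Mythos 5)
The paper does not actually prove this lemma: it is quoted as a known result, with the proof explicitly attributed to Cerf and Cirillo \cite{CC99} and to Lemma 35 of \cite{BBM09}, and it is then used as a black box in the proof of Proposition \ref{lower1}. So your decision to end by deferring the delicate estimate to those sources is consistent with how the paper itself treats the statement. If that were all, there would be nothing to object to.

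However, the sketch you offer as the ``structure'' of the argument has a concrete gap exactly at the bad-column bookkeeping, and it is not a gap that the binomial-sum step can paper over. In your first-moment computation a column $t\in T$ contributes only the factor $\p(E^b(G_t)\ne\emptyset)\le|S|^{-\varepsilon}$, but once you stop tracking the path inside a bad column you lose all control of \emph{where} the path exits it: the entry vertex of column $t+1$ is then a free choice among roughly $2|S|$ vertices, so the honest per-bad-column cost in a union bound over paths is of order $|S|\cdot|S|^{-\varepsilon}$, not $|S|^{-\varepsilon}$. This cannot be absorbed by the hypothesis $\alpha^4|S|^{\varepsilon}\ge 1$: that hypothesis is a \emph{lower} bound on $|S|$ (it gives $|S|^{-\varepsilon}\le\alpha^4$ but leaves $|S|^{1-\varepsilon}$ arbitrarily large), so the sum $\sum_{T}(C\delta)^{m-|T|}\big(|S|^{1-\varepsilon}\big)^{|T|}$ does not close, and the inequality $|S|^{-\varepsilon}\le\alpha/2$ you invoke is not the quantity that actually appears. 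Relatedly, your framing of condition ($b$) as controlling ``multi-edge routes within a column between entry and exit'' is off target: by admissibility, a single visit to a bad-edge-free column is a single good edge (components are cliques), and the BK condition is needed because a self-avoiding crossing may visit the \emph{same} column several times through disjoint vertex sets, and those several connections must be factorized. Handling the bad columns without paying the $|S|$-entropy per occurrence is precisely the non-routine part of the Cerf--Cirillo argument, so the claim that ``the binomial-sum step \dots{} [is] routine'' once the clique contraction is done is where your proposal would fail if carried out as written; as a proof it stands only in the same sense as the paper's, namely by citation to \cite{CC99} and \cite{BBM09}.
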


%%%%%%%%%%%%%%%%%%%%%%%%%%%%%%%%%%%%%%%%%%%%%%%%%%%%%%%%%%%%%%%%%%%%%%%%%%%%%%%%%%%%%%%%%%%%%%%

%\subsection{The proof}
We are ready to prove the lower bound.
\begin{proof}[Proof of Proposition \ref{lower1}]
We use induction on $d\ge 3$. Assume that the proposition holds for all dimensions $2,3,\dots, d-1$. In particular, Propositions \ref{condition(c)} and \ref{condition(d)} hold for dimension $d$.
Fix $i\in [a_1]$ and  consider $\mathcal N_{s_d+i}^{a_1,\dots,a_d}$-bootstrap percolation. Fix a small constant $\varepsilon>0$ and let $\gamma >0$ be the constant given by Proposition \ref{condition(c)}, then take $L=\exp_{(d-1)}\big(
\gamma \lambda_{i}(p)\big)$. Let us show that $\p\big(\textup{diam}([A])\ge \log L\big) \le L^{-1}$, as $p\to 0$. 

Suppose that $\textup{diam}([A])\ge \log L$, then by Lemma \ref{ALlema0} there exists an internally spanned rectangular block
 $R\subset [L]^d$ satisfying 
 \[\frac{\log L-1}{2a_d}\le \textup{diam}(R)\le \log L-1.\]
Let $N=\textup{diam}(R)$, then we can assume for simplicity that $R\subset [N]^d$. Moreover, there is a strongly connected path $X$ %(meaning a path in the graph $G^3$) 
in $[A\cap R]$ joining two opposite $(d-1)$-faces of $[N]^d$, and we can assume that this happens along the (easiest) the $e_d$-direction, so $X$ goes from the set $\{(x_1,\dots , x_d)\in [N]^d: x_d=1\}$ to the set $\{(x_1,\dots , x_d)\in [N]^d: x_d=N\}$. 

Now, let $m=\lfloor N/4a_d \rfloor$ and partition $[N]^d$ into blocks $B_1,\dots, B_{2m}$, each of size $[N]^{d-1}\times [2a_d]$ (for simplicity, assume that $N$ is a multiple of $4a_d$). So, $B_j=\{(x_1,\dots , x_d)\in [N]^d: x_d\in [2a_d(j-1)+1, 2a_d j]\}$, for each $j\in [2m]$.

For each $j$, let us consider a $(d-1)$-dimensional bootstrap process on $B_j':=[N]^{d-1}\times \{j\}$ as follows: Take the initially infected set 
$A'\sim \bigotimes_{v\in [N]^2\times [2m]}$Ber$(2a_d p)$ and then run the $\mathcal N_{s_{d-1}+i}^{a_1,\dots ,a_{d-1}}$-bootstrap process, independently on each $B_j'$.
Note that this defines a concatenated process on $[N]^{d-1}\times [2m]$ consisting of $2m$ independent $(d-1)$-dimensional processes, and couples our original $\mathcal N_{s_{d}+i}^{a_1,\dots ,a_{d}}$-process in the following way:\\
The probability of having a vertex in $A\cap \big([N]^{d-1}\times [(j-1)2a_d, j2a_d]\big) \subset B_j$ is at most $2a_dp$, which is the initial density (for $A'$) in $B_j'$. Also, each vertex in $B_j$ has at most $a_d$ neighbors in $[N]^d\setminus B_j.$ Thus, the projection of components of $[A\cap B_j]$ onto the $(d-1)$-plane orthogonal to $e_d$ is coupled by the components in $[A'\cap B_j']$.
In particular, the existence of $X$ implies the existence of a strongly connected path $X'\subset \bigcup_j[A'\cap B_j']$  from the set $\{(x_1,\dots , x_d)\in [N]^d: x_d=1\}$ to the set $\{(x_1,\dots , x_d)\in [N]^d: x_d=2m\}$. 

Next,  set $S=[N]^{d-1}$, and for each $j\in [2m]$, let $[A](j):=[A'\cap B_j']$, and define a two-colored graph $G_j$ on $S\times [2]$ by
\[uv\in E(G_j) \iff u',v'\textup{ are in the same strong component of }[A](j),\]
where $u'$ is the element of $[N]^{d-1}\times \{2j-1,2j\}$ corresponding to $u$ in the natural isomorphism, and define ``good'' edges by
\[u\sim v \iff \textup{there exists an internally filled strongly connected component}\]
\[\ \ \ \ \ \ \ \ \ \ \ X\subset [A](j) \textup{ such that } u,v\in X \textup{ and diam}(X)\le (\log N)^{1+\varepsilon}.\]

Note that $G_j$ is admissible (see for instance, the proofs of the lower bounds for Theorem 1 in \cite{BBDM12} and \cite{BBM09}). 
Therefore, it is enough to check that the sequence $\mathcal P=(G_1,\dots, G_m)\in \Omega(S,m)$ satisfies the conditions of Lemma \ref{mainLema}.

In fact, condition (a) follows by construction, while condition $(b)$ follows from the van Berg-Kesten Lemma (again, see the proof of Theorem 1 in \cite{BBM09}).

Now, since
$N\le \log L = \exp_{(d-2)}(\gamma \lambda_i(p))$, by  Proposition \ref{condition(c)} we conclude, as $p\to 0$,
\[\p_p(\textup{diam}([A](j))> (\log N)^{1+\varepsilon}) \le N^{-\varepsilon},\]
and by Proposition \ref{condition(d)}, for $p$ small,
\[\e_p(d_{\mathcal P}(v))\le O(\e_p(|\mathcal K|))\le O(\sqrt{p}) =o(1).\]
Finally, by Lemma \ref{mainLema} we conclude that for $\alpha>0$ small,
\[\p_p(X(\mathcal P)) \le \alpha^{\lfloor N/4a_d\rfloor}N^{d-1} \le 1/L^{2d},\]
then, summing over all possible choices of $R\subset [L]^d$ we get
\[\p_p(\textup{diam}([A])\ge \log L)\le 1/L,\]
and we are done.
\end{proof}
Note that in the above proof for $d\ge 4$, when defining ``good'' edges $u \sim v$, we could replace the size $(\log N)^{1+\varepsilon}$ by $O(\log N)$. That refinement would improve the lower bound for the $(d-1)$-times iterated logarithm of the threshold by just a constant factor.

%Therefore, the probability that $Q$ is internally spanned is at most $e^{-c(\delta)k}$
%\[\max\left\{e^{-k}, e^{-\Omega(\delta k)}\right\} \le  e^{-c(\delta)k},\]
% \le  e^{-c_3p^{-\frac 12}\sqrt{\log\tfrac 1p}}\] for some small constant $c(\delta)>0$. Finally, denoting by $\mathcal R_{k}$ the collection of blocks $[x]\times[y]\times[z]\subset[L]^3$ satisfying $y+z\le 4ck$, it follows by union bound that
%\begin{align*}
% \p_p(I^{\bullet}([L]^3)) & \le 
%\p_p(I^{\times}([L]^3))
%& \le \sum_{Q\in\mathcal R_{k}}\p_p(I^{\times}(Q))
% \le |\mathcal R_{k}|e^{-c(\delta)k}
 %\le L^7 \exp\left(-c(\delta)p^{- 1/2}(\log\tfrac 1p)^{1/2}\right) \to 0,
 %e^{-c(\delta)p^{-\frac 12}\sqrt{\log\tfrac 1p}}\to 0,
%\end{align*}
%as $p\to 0$, for $7\gamma<c(\delta)$, and we are finished.

%%%%%%%%%%%%%%%%%%%%%%%%%%%%%%%%%%%%%%%%%%%%%%%%%%%%%%%%%%%%%%%%%%%%%%%%%%%%%%
%%%%%%%%%%%%%%%%%%%%%%%%%%%%%%%%%%%%%%%%%%%%

%%%%%%%%%%%%%%%%%%%%%%%%%%%%%%%%%%%%%%%%%%%%%%%%%%%%%%%%%%%%%%%%%%%%%%%%%%%%%%%%%%%%%%%%%%%%%%%%%%%%%%%%%%%%%%%%%%%

\section{Future work}\label{future}
In dimension $d=3$, a problem which remains open is the determination of the threshold for  $a_3+3 \le r\le a_2+a_3$ (the 2-critical families).
%and some progress in that direction is found in \cite{DB22-2}.
We believe that the techniques used in \cite{DB20} can be adapted to cover these cases 
(though significant technical obstacles remain); recall that in this case, by \eqref{2criti}, the critical length is singly exponential.

For dimensions $d\ge 4$, by using the techniques in \cite{CM02}, it can be shown that as $p\to 0$,
$\log L_c\left(\mathcal N_{a_d+1}^{a_d,\dots,a_d},p\right)\ge \Omega\left(p^{-1/(d-1)}\right)$, so that $\log L_c\left(\mathcal N_{r}^{a_1,\dots,a_d},p\right)\ge \Omega\left(p^{-1/(d-1)}\right)$ for $r\ge a_d+1$.

On the other hand, as it was shown in the appendix of \cite{DB20}, by using Lemma \ref{elactual} and decomposing $[L]^d$ as $L^{d-2}$ disjoint copies of $[L]^2$ all of them parallel to the $e_{d-1}$ and $e_d$-directions, we can see that for $r\in\{a_d+1,\dots,a_d+ a_{d-1}\}$,
\[\log L_c\left(\mathcal N_{r}^{a_1,\dots,a_d},p\right)\le O\left(\log L_c(\mathcal N_{r}^{a_{d-1},a_d},p)\right)
=O\left(p^{-(r-a_d)}(\log p)^2\right).\]

So, it follows that the critical length is singly exponential in the cases
\[r\in\{a_d+1,\dots,a_d+ a_{d-1}\},\]
and the family is 2-critical, by Definition \ref{rcri}.
 It is an interesting open problem to find the critical length for all critical anisotropic models in all dimensions, and by Corollary \ref{only2cri}, we need to do it for the 2-critical families only.
 
 \begin{problem}\label{elproblema}
 Determine the critical length $ L_c(\mathcal N_r^{a_1,\dots,a_d},p)$ for all $d\ge 3, \ a_1\le \cdots \le a_d$ and all $r\in\{a_d+1,\dots,a_d+ a_{d-1}\}$.
 \end{problem}
 
%We believe that Problem \ref{elproblema} can be reduced to the determination of the critical length for these values of $r$ only.
%More precisely, we think that the following conjecture should be true.
%\begin{conj}
%Given $a_1\le \cdots \le a_d$ and $r\in\{a_d+1,\dots,a_d+ a_{d-1}+\cdots +a_1\}$, let $m:=\min\{k:r> a_d+a_{d-1}+\cdots +a_k\}$. Then, as $p\to 0$,
%\begin{equation*}
 %   L_c\left(\mathcal N_{r}^{a_1,\dots,a_d},p\right)
  %  = \exp_{(d-m+1)}\left( \Theta\left(\log L_c\left(\mathcal N_{r-(a_d+ \cdots+ a_{m+1})}^{a_1,\dots,a_m},p\right)
  %  \right)  \right).
%\end{equation*}
%\end{conj}

%Note that %$m\in \{2,\dots, d\}$ and 
%$r-(a_d+ \cdots+ a_{m+1}) \in\{a_m+1,\dots,a_m+ a_{m-1}\}$, so that $\log L_c\left(\mathcal N_{r-(a_d+ \cdots+ a_{m+1})}^{a_1,\dots,a_m},p\right)$ is polynomial in $p$, as $p\to 0$, by the above discussion.

%Another way to state this conjecture is: For every $m\in \{2,\dots, d\}$ and every $i\in [a_{m-1}]$, as $p\to 0$,
%\[\log_{(d-m+1)} L_c\left(\mathcal N_{a_d+ \cdots+ a_{m}+i}^{a_1,\dots,a_d},p\right)     = \Theta\left(\log L_c\left(\mathcal N_{a_{m}+i}^{a_1,\dots,a_m},p\right)     \right).     \]

%%%%%%%%%%%%%%%%%%%%%%%%%%%%%%%%%%%%%%%%%%%%%%%%%%%%%%%%%%%%%%%%%%%%%%%%%%%%%%%%%%%%%%%%%%%%%%%%%%%%%%%%%%%%%%
%%%%%%%%%%%%%%%%%%%%%%%%%%%%%%%%%%%%%%%%%%%%%%%%%
\section*{Acknowledgements}
The author is very grateful to Janko Gravner and Rob Morris for their stimulating conversations on this project, and their many invaluable suggestions. 

%\begin{thebibliography}{9}

%\bibitem{FFKPY} M.~Filaseta, K.~Ford, S.~Konyagin, C.~Pomerance and G.~Yu,
%Sieving by large integers and covering systems of congruences,
%\sl J. Amer. Math. Soc., \bf 20 \rm (2007), 495--517.

%\end{thebibliography}

\bibliographystyle{plain}
\bibliography{References}
\end{document}